\newtheorem{theorem}{Theorem}[section]
\newtheorem{definition}{Definition}
\newtheorem{lemma}{Lemma}[section]
\newtheorem{corollary}{Corollary}[section]
\newtheorem{conjecture}{Conjecture}[section]
\newtheorem{observation}{Observation}
\theoremstyle{definition}
\newtheorem{claim}{Claim}
\theoremstyle{definition}
\title{The Extremal Function and Colin de Verdi\`{e}re Graph Parameter}
\author{
\textbf{Rose McCarty}\footnote{\href{mailto:rmccarty3@gatech.edu}{\texttt{rmccarty3@gatech.edu}}. Partially supported by NSF under Grant No. DMS-1202640.}\\
School of Mathematics\\
Georgia Institute of Technology\\
Atlanta, GA 30332-0160, USA
}
\date{}
\begin{document}

\maketitle

\begin{abstract}
We study the maximum number of edges in an $n$ vertex graph with Colin de Verdi\`{e}re parameter no more than $t$. We conjecture that for every integer $t$, if $G$ is a graph with at least $t$ vertices and Colin de Verdi\`{e}re parameter at most $t$, then $|E(G)| \leq t|V(G)|-\binom{t+1}{2}$. We observe a relation to the graph complement conjecture for the Colin de Verdi\`{e}re parameter and prove the conjectured edge upper bound for graphs $G$ such that either $\mu(G) \leq 7$, or $\mu(G) \geq |V(G)|-6$, or the complement of $G$ is chordal, or $G$ is chordal.
\end{abstract}

\section{Introduction}
We consider only finite, simple graphs without loops. Let $\mu(G)$ denote the Colin de Verdi\`{e}re parameter of a graph $G$ introduced in \cite{cdvIntro} (cf. \cite{cdvIntroTranslated}). We give a formal definition of $\mu(G)$ in Section 2. The Colin de Verdi\`{e}re parameter is minor-monotone; that is, if $H$ is a minor of $G$, then $\mu(H) \leq \mu(G)$. Particular interest in this parameter stems from the following characterizations:  

\begin{theorem} For every graph $G$:
\begin{enumerate}
\itemsep0em
\item $\mu(G) \leq 1$ if and only if $G$ is a subgraph of a path.
\item $\mu(G) \leq 2$ if and only if $G$ is outerplanar.
\item $\mu(G) \leq 3$ if and only if $G$ is planar.
\item $\mu(G) \leq 4$ if and only if $G$ is linklessly embeddable.
\end{enumerate}
\end{theorem}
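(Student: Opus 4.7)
The theorem is a compilation of four classical characterizations (items 1--3 due to Colin de Verdière in his original paper, and item 4 due to Lovász and Schrijver building on the Robertson--Seymour--Thomas characterization of linklessly embeddable graphs). The plan is to treat each item with the same two-step template: for the ``if'' direction, construct, from the embedding/structural description of the class, an explicit matrix realizing the bound $\mu(G)\le t$; for the ``only if'' direction, combine minor-monotonicity with the known forbidden-minor list for the class to reduce to verifying $\mu(H)\ge t+1$ for each graph $H$ on that finite list.

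First I would fix the excluded-minor lists. Subgraphs of paths are exactly the graphs with no $K_3$ or $K_{1,3}$ minor; outerplanar graphs exclude $K_4$ and $K_{2,3}$; planar graphs exclude $K_5$ and $K_{3,3}$ (Kuratowski/Wagner); and linklessly embeddable graphs exclude precisely the seven graphs in the Petersen family. Given these, the ``only if'' direction in each item reduces to the lower bounds $\mu(K_3),\mu(K_{1,3})\ge 2$, then $\mu(K_4),\mu(K_{2,3})\ge 3$, then $\mu(K_5),\mu(K_{3,3})\ge 4$, and finally $\mu(H)\ge 5$ for each $H$ in the Petersen family. Each such bound is proved by exhibiting an admissible matrix in the sense of the definition of $\mu$ (to be given in Section~2), in particular one satisfying the Strong Arnold Property with the correct corank.

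For the upper-bound directions the constructions exploit the geometry of the relevant embedding. For paths, a tridiagonal sign-patterned matrix suffices and gives $\mu\le 1$ directly. For outerplanar and planar graphs one adapts Colin de Verdière's original argument, which produces a discrete Schrödinger-type operator from the embedding and uses Cauchy interlacing together with a topological nonvanishing argument to control the multiplicity of its second eigenvalue. For linklessly embeddable graphs, the Lovász--Schrijver construction starts from a flat embedding in $\mathbb{R}^3$ and uses an obstruction coming from Borromean-link configurations to produce a positive semidefinite matrix witnessing $\mu\le 4$.

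The main obstacle will be item~4. It depends on the full strength of the Petersen-family theorem, a deep consequence of the graph minors project, and the matching matrix construction from a flat embedding is considerably more intricate than in the planar case, where the argument is essentially two-dimensional spectral geometry. Items~1--3, by contrast, reduce to finite and fairly explicit matrix computations once the Kuratowski/Wagner lists are in hand.
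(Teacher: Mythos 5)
The paper does not prove this theorem at all: it is stated as background, with items 1--3 attributed to Colin de Verdi\`{e}re's original paper, the direction ``$\mu(G)\le 4$ implies linklessly embeddable'' to Robertson--Seymour--Thomas via the Petersen-family theorem, and the converse to Lov\'{a}sz--Schrijver. Your outline correctly identifies exactly these sources and the standard two-step structure (forbidden-minor lists plus minor-monotonicity for one direction, an embedding-based matrix argument for the other), so in that sense it matches the paper's attributions; but it is a roadmap rather than a proof, since all of the genuinely hard content (Colin de Verdi\`{e}re's planarity bound, the Petersen-family theorem, the Borsuk theorem for antipodal links) is deferred to the cited results, which is precisely what the paper itself does. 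One small correction of emphasis: the Lov\'{a}sz--Schrijver argument for item 4 does not build a matrix from a flat embedding; it runs contrapositively, using their Borsuk-type theorem to show that a Colin de Verdi\`{e}re matrix of corank at least $5$ forces a pair of linked cycles in every embedding. Also, for the lower bounds $\mu(H)\ge 5$ on the seven Petersen-family graphs, the standard route is not seven separate matrix computations but the invariance of $\mu$ under $\Delta$--$\mathrm{Y}$ exchanges applied to $K_6$.
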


Items 1, 2, and 3 were shown by Colin de Verdi\`{e}re in \cite{cdvIntro}. Robertson, Seymour, and Thomas noted in \cite{linklessSurvey} that $\mu(G) \leq 4$ implies that $G$ has a linkless embedding due to their theorem that the Petersen family is the forbidden minor family for linkless embeddings \cite{linklessMinors}. The other direction for 4 is due to Lov\'{a}sz and Schrijver \cite{BorsukThm}. See the survey of van der Holst, Lov\'{a}sz, and Schrijver for a thorough introduction to the parameter \cite{survey}.

There is also a relation between the Colin de Verdi\`{e}re parameter and Hadwiger's conjecture that for every non-negative integer $t$, every graph with no $K_{t+1}$ minor is $t$-colorable. Let $\chi(G)$ denote the chromatic number of a graph $G$ and let $h(G)$ denote the Hadwiger number of $G$. That is, $h(G)$ is the largest integer so that $G$ has the complete graph $K_{h(G)}$ as a minor. Then $\mu(K_{h(G)}) = h(G)-1$, and so $\mu(G) \geq \mu(K_{h(G)}) = h(G)-1$ \cite{survey}. So if Hadwiger's conjecture is true, then for every graph $G$, $\chi(G) \leq \mu(G)+1$. Colin de Verdi\`{e}re conjectured that every graph satisfies $\chi(G) \leq \mu(G)+1$ in \cite{cdvIntro}. For graphs with $\mu(G) \leq 3$, this statement is exactly the 4-Color Theorem \cite{Appel}, \cite{RobertsonColour}.

One way to look for evidence for Hadwiger's conjecture is through considerations of average degree. In particular Mader showed that for every family of graphs $\mathcal{F}$, there is an integer $c$ so that if $G$ is a graph with no graph in $\mathcal{F}$ as a minor, then $|E(G)| \leq c|V(G)|$ \cite{Mader}. It follows by induction on the number of vertices that every graph $G$ with no graph in $\mathcal{F}$ as a minor is $2c+1$-colorable. In fact Mader showed that:

\begin{theorem} \cite{MaderEdgeBound}
\label{hadwigerEdgeBound}
For $t \leq 5$, if $G$ is a graph with $h(G) \leq t+1$ and $|V(G)| \geq t$, then $|E(G)| \leq t|V(G)|-\binom{t+1}{2}$.
\end{theorem}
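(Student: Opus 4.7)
The plan is induction on $n = |V(G)|$, with base case $n = t$. Here the bound $tn - \binom{t+1}{2} = \binom{t}{2}$ is trivially satisfied since any simple graph on $t$ vertices has at most $\binom{t}{2}$ edges, and $K_t$ (which has no $K_{t+2}$ minor, being too small) shows the bound is tight.

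For the inductive step with $n > t$, first suppose $G$ has a vertex $v$ with $\deg(v) \leq t$. Then $G - v$ is $K_{t+2}$-minor-free with $n - 1 \geq t$ vertices, so the inductive hypothesis gives $|E(G - v)| \leq t(n-1) - \binom{t+1}{2}$ and hence $|E(G)| \leq t(n-1) - \binom{t+1}{2} + t = tn - \binom{t+1}{2}$. So we may assume $\delta(G) \geq t + 1$. Next, I would search for an edge $uv$ with $|N(u) \cap N(v)| \geq t - 1$. Contracting such an edge yields a $K_{t+2}$-minor-free graph $G/uv$ on $n - 1$ vertices and at most $|E(G)| - t$ edges (we lose $uv$ itself together with at least $t - 1$ pairs of parallel edges), and another application of induction gives the target bound.

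This reduces the theorem to the critical case $\delta(G) \geq t+1$ in which every edge has at most $t - 2$ common neighbors. The goal here is to derive a contradiction by exhibiting a $K_{t+2}$ minor, i.e., to show that this configuration is vacuous. For $t \in \{1, 2, 3\}$ this is handled by classical edge counts for forests, series-parallel graphs, and $K_5$-minor-free graphs (via Wagner's decomposition), respectively.

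The main obstacle is $t = 4, 5$: one must prove that a graph with $\delta \geq t + 1$ in which no edge has $\geq t - 1$ common neighbors contains a $K_{t+2}$ minor. This is the technical core of Mader's original theorems for $K_6$- and $K_7$-minor-free graphs, and I expect it to absorb most of the work. The standard line of attack passes to a minimally $(t+1)$-connected subgraph, extracts a small \emph{fragment} whose interior has controlled neighborhood structure, and explicitly routes internally disjoint paths among a carefully chosen set of $t+2$ branch vertices to realize the complete minor. The case analysis grows sharply with $t$, and it is precisely the failure of such path-routing arguments beyond $\delta \geq 7$ that prevents the induction from extending past $t = 5$.
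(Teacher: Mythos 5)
This statement is quoted from Mader \cite{MaderEdgeBound}; the paper contains no proof of it, so the only meaningful comparison is with the standard (Mader-style) argument your outline gestures at --- and there your contraction step has the inequality backwards. You propose to find an edge $uv$ with $|N(u)\cap N(v)| \geq t-1$ and note that $G/uv$ has at most $|E(G)|-t$ edges; but the two \emph{upper} bounds $|E(G/uv)| \leq |E(G)|-t$ and $|E(G/uv)| \leq t(n-1)-\binom{t+1}{2}$ together give no upper bound on $|E(G)|$ at all. Since $|E(G)| = |E(G/uv)| + 1 + |N(u)\cap N(v)|$, the induction closes exactly when $|N(u)\cap N(v)| \leq t-1$: one must contract an edge with \emph{few} common neighbours, not many, so that at most $t$ edges are lost. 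Consequently the critical configuration is the opposite of the one you describe: in a minimal counterexample every edge lies in at least $t$ triangles (and $\delta \geq t+1$), and the task is to extract a $K_{t+2}$ minor from that locally \emph{dense} structure. Your version --- forcing a $K_{t+2}$ minor in a graph where every edge has at most $t-2$ common neighbours --- is not what Mader proves and is hopeless in spirit, since sparse common neighbourhoods work against clique minors; as written, neither branch of your case split yields the bound.

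Even after reversing the inequality, the substance of the theorem for $t \in \{4,5\}$ is the claim that a graph with $\delta \geq t+1$ in which every edge lies in at least $t$ triangles has a $K_{t+2}$ minor, and your proposal explicitly defers this to ``Mader's original theorems,'' which is circular if the goal is to prove the statement. For $t \leq 3$ the reduction does finish quickly: $G[N(v)]$ has minimum degree at least $t$, hence contains a $K_{t+1}$ minor (an edge, a cycle, or a $K_4$-subdivision respectively), which together with $v$ gives $K_{t+2}$. But already for $t=4$ a graph of minimum degree $4$ need not contain a $K_5$ minor (the octahedron $K_{2,2,2}$ is $4$-regular and planar), so the neighbourhood argument breaks and genuinely new work is required. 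As it stands the proposal neither completes the proof nor correctly reduces the theorem to a true lemma.
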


However asymptotically, as noted by Kostochka \cite{avgDegHadwiger} and Thomason \cite{Thomason}, based on Bollob\'{a}s et at. \cite{BollobasEtAl}:

\begin{theorem} \cite{avgDegHadwiger}, \cite{Thomason}
There exists a constant $c \in \mathbb{R}^+$ such that for every positive integer $t$ there exists a graph $G$ with $h(G) \leq t+1$ and $|E(G)| > ct\sqrt{\log{t}}|V(G)|$. 
\end{theorem}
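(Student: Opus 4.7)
The plan is to prove this existential statement probabilistically by exhibiting random graphs with many edges and no large clique minor. Specifically, I would consider the Erd\H{o}s--R\'enyi random graph $G(n,p)$ for suitable $n = n(t)$ and a constant edge probability $p$ (say $p = 1/2$), and show that with positive probability $G(n,p)$ simultaneously has Hadwiger number at most $t+1$ and average degree of order $t\sqrt{\log t}$. Setting $n$ of order $t\sqrt{\log t}$ and using $|E(G(n,p))| = \Theta(n^2)$ gives an edge count of order $t^2 \log t$, which is precisely a constant multiple of $t\sqrt{\log t}\cdot n$, as required.

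First I would handle the easy direction: a Chernoff bound gives $|E(G(n,p))| \geq \tfrac{p}{2}\binom{n}{2}$ with probability tending to $1$, so the edge lower bound holds almost surely. The harder direction is to show that $h(G(n,p)) \leq t+1$ with positive probability when $n$ is of order $t\sqrt{\log t}$. A $K_{t+1}$ minor is witnessed by $t+1$ pairwise disjoint \emph{branch sets} $V_1,\dots,V_{t+1} \subseteq V(G)$, each inducing a connected subgraph, with at least one edge between every pair $V_i, V_j$. I would union-bound over all labeled choices of such branch sets, using Cayley's formula weighted by $p^{s_i-1}$ to estimate the probability that a fixed set of size $s_i$ induces a connected subgraph, and $1 - (1-p)^{s_i s_j}$ for the probability of at least one crossing edge between sets of sizes $s_i$ and $s_j$.

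The main obstacle is organizing this first-moment calculation so that, when $t+1$ is of order $n/\sqrt{\log n}$, the expected number of $K_{t+1}$ minors is $o(1)$. The crucial trade-off is that small branch sets can easily be internally connected but rarely have edges to every other branch set, while large branch sets cross one another easily but are unlikely to all be internally connected and to fit inside a vertex set of size $n$. I would split the sum over size patterns $(s_1,\dots,s_{t+1})$ by the total $s = \sum_i s_i$: when $s$ is small compared to $(t+1)\log n$ too few crossing edges are expected, whereas when $s$ is large either the branch sets cannot all fit in $V(G)$ or the combinatorial cost of internal connectivity becomes prohibitive. Balancing these regimes yields the threshold $h(G(n,p)) = O(n/\sqrt{\log n})$ with high probability; solving for $n$ in terms of $t$ via $n$ of order $t\sqrt{\log t}$ then delivers a graph $G$ with $h(G) \leq t+1$ and $|E(G)| > c\, t\sqrt{\log t}\,|V(G)|$ for an absolute constant $c > 0$.
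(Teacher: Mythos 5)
The paper states this result only as a citation (to Kostochka and Thomason, building on Bollob\'as, Catlin and Erd\H{o}s), and your random-graph argument is exactly the one used in those sources: show $h(G(n,1/2)) = O(n/\sqrt{\log n})$ almost surely via a first-moment bound over branch-set configurations, then choose $n$ of order $t\sqrt{\log t}$. One simplification worth noting: the internal-connectivity factor and the case split on $\sum_i s_i$ are unnecessary --- since the branch sets are disjoint, at least half of them have size at most $2n/(t+1) = O(\sqrt{\log n})$, and the probability that these small sets alone are pairwise joined is already small enough to beat the at most $(t+2)^n$ choices of configuration, which is the clean way to close the union bound.
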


Furthermore, Kostochka showed that asymptotically in $t$ the same is an upper bound \cite{avgDegHadwiger}. This gives the best known bound on Hadwiger's conjecture, that graphs $G$ with no $K_t$ minor have $\chi(G) \leq \mathcal{O}(t\sqrt{\log{t}})$. We conjecture that an analog of Theorem \ref{hadwigerEdgeBound} holds instead for the Colin de Verdi\`{e}re parameter:

\begin{conjecture}
\label{mainConjecture}
For every integer $t$, if $G$ is a graph with $\mu(G) \leq t$ and $|V(G)| \geq t$, then $|E(G)| \leq t|V(G)|-\binom{t+1}{2}$.
\end{conjecture}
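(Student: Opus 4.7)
The plan is double induction on $t$ and on $n = |V(G)|$. The base of the $t$-induction is $t \leq 5$: here $\mu(G) \leq t$ implies $h(G) \leq t+1$, and the bound follows from Mader's Theorem~\ref{hadwigerEdgeBound}. So we may restrict attention to $t \geq 6$. The base of the $n$-induction is $n = t$, where $|E(G)| \leq \binom{t}{2} = t^2 - \binom{t+1}{2}$, as required.

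For the inductive step with $n > t$, pick any vertex $v$; by a known property of the Colin de Verdi\`{e}re parameter, $\mu(G) - 1 \leq \mu(G - v) \leq \mu(G) \leq t$. If there exists $v$ with $\mu(G-v) \leq t - 1$, I would apply the outer (induction on $t$) hypothesis to $G-v$ to get $|E(G-v)| \leq (t-1)(n-1) - \binom{t}{2}$; a routine calculation, using $\binom{t+1}{2} - \binom{t}{2} = t$, shows $|E(G)| \leq tn - \binom{t+1}{2}$ whenever $\deg(v) \leq n-1$, which is automatic. Otherwise $\mu(G-v) = t$ for every $v$; the inner induction on $n$ gives $|E(G-v)| \leq t(n-1) - \binom{t+1}{2}$, and we would need to produce a vertex $v$ with $\deg(v) \leq t$.

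The main obstacle is exactly this last step: showing that a graph in which every vertex deletion preserves $\mu(G) = t$ admits a vertex of degree at most $t$. Even for $t = 3$ the octahedron is such a $\mu$-vertex-invariant graph with $\delta = 4$, so pure induction cannot close the argument; for $t \leq 5$ the structural input specific to the given $\mu$-class (Euler's formula, Mader's arguments for linkless graphs, etc.) is what supplies the missing ingredient. To attack the case in general, I would investigate: (i)~matrix-theoretic consequences of the Strong Arnold Hypothesis in the defining optimal matrix $M$ that might force a vertex of bounded degree, for instance by locating a row of $M$ with few nonzero off-diagonal entries; (ii)~the graph complement conjecture $\mu(G) + \mu(\bar{G}) \geq n - 2$, which would reduce dense $G$ to the $\mu(\bar{G}) \geq n-6$ regime already handled in the paper; and (iii)~edge-contraction arguments, relying on minor-monotonicity together with a carefully chosen edge whose contraction removes at least $t+1$ edges from the count.

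This matches the scope of the paper, which restricts to cases where the bottleneck dissolves: chordality supplies a simplicial vertex of degree at most $\mu(G)$, while the small-$\mu$ and dense regimes either reduce to Mader's theorem or are amenable to complement arguments. A fully general proof appears to require genuinely new structural information extracted from the matrix-theoretic definition of $\mu$, going beyond what minor-monotonicity alone provides.
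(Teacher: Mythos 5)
You were asked to prove Conjecture~\ref{mainConjecture}, which is the open main conjecture of the paper; the paper itself establishes it only in the special cases collected in Theorem~\ref{mainThm}. So no complete argument should be expected here, and you rightly did not claim one. Your reductions are all correct and in fact coincide with the paper's Lemma~\ref{basicCE}: your computation showing that the induction closes whenever some vertex $v$ has $\mu(G-v)\le t-1$ or $d(v)\le t$ is precisely the paper's proof that a minimal counterexample satisfies $\mu(G)<\delta(G)\le\Delta(G)<n-1$. Your diagnosis of the bottleneck is also accurate: the octahedron (with $\mu=3$, every vertex of degree $4$, and the bound met with equality) shows that one cannot in general find a vertex of degree at most $t$, so the induction must be supplemented by structural input. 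The two supplements you propose under (ii) and (iii) are exactly the ones the paper deploys where it can --- an edge-contraction argument for complements of chordal graphs, and the Kotlov--Lov\'{a}sz--Vempala complement bound (Theorem~\ref{largeMu}) in the dense regime $\mu(G)\ge n-6$ --- while chordality supplies the low-degree (simplicial) vertex directly.

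One refinement worth noting: the paper pushes the small-parameter base case beyond $t\le 5$ up to $\mu(G)\le 7$. For $\mu(G)=6$ and $\mu(G)=7$ Mader's theorem no longer applies, but J{\o}rgensen's and Song--Thomas's extremal results for $K_8$ and $K_9$ minors (Theorems~\ref{K8} and~\ref{K9}) characterize the graphs exceeding the bound as containing $K_{2,2,2,2,2}$, $K_{1,2,2,2,2,2}$, or $K_{2,2,2,3,3}$ as subgraphs, and these all have Colin de Verdi\`{e}re parameter too large (by Theorem~\ref{largeMu} applied to their sparse complements) to occur in such $G$. This illustrates the general phenomenon you identified: the counterexamples to the naive Hadwiger-number analogue are exactly the graphs that the stronger hypothesis $\mu(G)\le t$ excludes, but no mechanism is currently known for extracting the needed low-degree vertex or contractible edge from the matrix definition of $\mu$ for arbitrary $t$.
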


Nevo asked if this is true and showed that his Conjecture 1.5 in \cite{Nevo} implies Conjecture \ref{mainConjecture}. Tait also asked this question as Problem 1 in \cite{Tait} in relation to studying graphs with maximum spectral radius of their adjacency matrix, subject to having Colin de Verdi\`{e}re parameter at most $t$. We also observe that there is a relation between Conjecture \ref{mainConjecture} and the graph complement conjecture for the Colin de Verdi\`{e}re parameter. Let $\overline{G}$ denote the complement of $G$. The graph complement conjecture for the Colin de Verdi\`{e}re parameter is as follows:

\begin{conjecture}
\label{graphComplement}
For every graph $G$, $\mu(G)+\mu(\overline{G}) \geq |V(G)|-2$.
\end{conjecture}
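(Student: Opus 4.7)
Since Conjecture \ref{graphComplement} remains open, any proof proposal is necessarily speculative; what follows is the most direct attack suggested by the matrix definition of $\mu$. The plan is to work with symmetric matrices $M$ and $M'$ realizing $k := \mu(G)$ and $\ell := \mu(\overline{G})$ respectively, exploiting the fact that their off-diagonal sign patterns are complementary: $M$ has negative off-diagonal entries exactly on edges of $G$, while $M'$ has them exactly on edges of $\overline{G}$. Consequently, $M + M'$ has negative off-diagonal entries on every distinct pair, matching the off-diagonal sign pattern required of a Colin de Verdi\`ere matrix for $K_n$.

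The natural first step is a dimension count: $\dim(\ker M \cap \ker M') \geq k + \ell - n$, and every vector in this intersection lies in $\ker(M + M')$, so $\operatorname{corank}(M + M') \geq k + \ell - n$. If a diagonal perturbation $D$ could be chosen so that $M + M' + D$ is a feasible CDV matrix for $K_n$ (one negative eigenvalue together with the Strong Arnold Property), then $n - 1 = \mu(K_n)$ would bound its corank, but unfortunately this yields only the upper bound $k + \ell \leq 2n - 3$. This wrong-direction phenomenon shows that the argument must flow the other way: starting from a matrix $N$ realizing $\mu(K_n) = n - 1$, one would decompose $N = M + M' + D$ with $M$ supported on $G$ and $M'$ on $\overline{G}$, and try to arrange that $M$ and $M'$ are feasible CDV matrices for their respective graphs with coranks summing to at least $n - 2$.

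The hard part will be preserving the two subtle CDV conditions---exactly one negative eigenvalue, and the Strong Arnold Property---for \emph{both} $M$ and $M'$ simultaneously under such a decomposition. These conditions interact globally and are not stable under splitting a matrix into two pieces with prescribed supports, which is likely why the conjecture has resisted elementary approaches. Absent a full proof, I would first attempt the conjecture for structured classes where $\mu(\overline{G})$ admits minor-based lower bounds---chordal graphs, graphs with small $\mu$, or graphs whose complement contains a large clique minor---echoing the case-by-case strategy this paper adopts for Conjecture \ref{mainConjecture}.
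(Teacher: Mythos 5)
You have correctly identified that this statement is an open conjecture: the paper itself offers no proof of Conjecture \ref{graphComplement}, attributing it to Kotlov, Lov\'asz, and Vempala and recording only the special cases where $G$ is planar (Theorem \ref{largeMu}) or chordal (Theorem \ref{chordalBound}). So there is no proof in the paper to compare your proposal against, and your proposal, as you candidly say, is not a proof either. Your preliminary observations are sound: the kernels of CDV matrices $M$ for $G$ and $M'$ for $\overline{G}$ intersect in dimension at least $\mu(G)+\mu(\overline{G})-n$, the sum $M+M'$ has the off-diagonal sign pattern of a matrix for $K_n$, and the Strong Arnold Property is vacuous for $K_n$; but since $M+M'$ may have two negative eigenvalues and, more fundamentally, the corank bound runs in the wrong direction, this only yields a trivial upper bound on $\mu(G)+\mu(\overline{G})$. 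The reverse decomposition $N=M+M'+D$ that you then propose is exactly where the difficulty lives, and you give no mechanism for arranging that both summands have one negative eigenvalue, satisfy the Strong Arnold Property, and have coranks summing to $n-2$; the known partial results instead go through sphere representations (for planar $G$) and the clique-number characterization of $\mu$ for chordal graphs, not through any such splitting. In short: the gap is the entire conjecture, which is consistent with its status in the paper, and your writeup should be read as a correct diagnosis of why the naive matrix-decomposition attack fails rather than as progress toward a proof.
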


This conjecture was introduced by Kotlov, Lov\'{a}sz, and Vempala, who showed that the conjecture is true if $G$ is planar \cite{sphereRep}. Their result is used in this paper and will be stated formally in Section 4. Conjecture \ref{graphComplement} is also an instance of a Nordhaus-Gaddum sum problem. See the recent paper by Hogben for a survey of Nordhaus-Gaddum problems for the Colin de Verdi\`{e}re and related parameters, including Conjecture \ref{graphComplement} \cite{Nordhaus-Gaddum}. We observe that:

\begin{observation}
\label{Obs:impliesWeakGCC}
If there exists a constant $c \in \mathbb{R^+}$ so that for every graph $G$, $|E(G)| \leq c\mu(G)|V(G)|$, then there exists a constant $p \in \mathbb{R}^+$ so that for every graph $G$, $\mu(G)+\mu(\overline{G}) \geq p|V(G)|$.
\end{observation}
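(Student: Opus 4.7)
The plan is to apply the hypothesized bound on $|E(G)|$ to both $G$ and its complement $\overline{G}$, and then combine the two inequalities using the identity $|E(G)| + |E(\overline{G})| = \binom{|V(G)|}{2}$. The key point is that the right-hand side of this identity is quadratic in $n = |V(G)|$, while the hypothesis only gives linear-in-$n$ bounds on each of $|E(G)|$ and $|E(\overline{G})|$ (with coefficients $c\mu(G)$ and $c\mu(\overline{G})$); so the sum $\mu(G) + \mu(\overline{G})$ must itself grow linearly in $n$.

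Concretely, setting $n = |V(G)|$, the hypothesis applied to $G$ and to $\overline{G}$ gives
\[ |E(G)| \le c\,\mu(G)\,n \qquad \text{and} \qquad |E(\overline{G})| \le c\,\mu(\overline{G})\,n. \]
Summing these and invoking $|E(G)| + |E(\overline{G})| = n(n-1)/2$ produces
\[ c\,n\bigl(\mu(G) + \mu(\overline{G})\bigr) \;\ge\; \frac{n(n-1)}{2}, \]
and hence $\mu(G) + \mu(\overline{G}) \ge (n-1)/(2c)$. For $n \ge 2$ this is at least $n/(4c)$, so the constant $p = 1/(4c)$ does the job.

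There is really no obstacle to overcome: the proof is a single line of algebra once the complement edge identity is in hand. What makes the observation worth stating is the conceptual link it provides. Any linear-in-$\mu$ edge upper bound, such as the one asserted in Conjecture \ref{mainConjecture}, would immediately imply a weakened form of the graph complement conjecture (Conjecture \ref{graphComplement}), with the lower bound $|V(G)|-2$ replaced by a possibly smaller constant multiple of $|V(G)|$. In particular, Conjecture \ref{mainConjecture} itself yields such a weak Nordhaus--Gaddum bound with $c = 1$, hence with $p = 1/4$.
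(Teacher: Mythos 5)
Your proof is correct and is essentially identical to the paper's one-line argument: apply the hypothesis to both $G$ and $\overline{G}$, sum, and use $|E(G)|+|E(\overline{G})|=\binom{|V(G)|}{2}$. No meaningful difference in approach.
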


This follows from noting that we would have $c\mu(G)|V(G)|+c\mu(\overline{G})|V(G)| \geq |E(G)|+|E(\overline{G})| = \binom{|V(G)|}{2}$. So our main Conjecture \ref{mainConjecture} would imply an asymptotic version of the graph complement conjecture for the Colin de Verdi\`{e}re parameter. This weaker version is currently not known. In the other direction we will show in Section 2 that:

\begin{observation}
\label{Obs:GCCimplies}
If for every graph $G$, $\mu(G)+\mu(\overline{G}) \geq |V(G)|-2$, then every graph $G$ has $|E(G)| \leq (\mu(G)+1)|V(G)| - \binom{\mu(G)+2}{2}$.
\end{observation}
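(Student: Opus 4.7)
\emph{Plan.} The plan is to pass to the complement. Setting $n = |V(G)|$ and $t = \mu(G)$, a direct computation confirms the algebraic identity
\[
(t+1)\,n - \binom{t+2}{2} \;=\; \binom{n}{2} - \binom{n-t-1}{2},
\]
obtained by expanding $(n-t-1)(n-t-2) = n^{2} - (2t+3)n + (t+1)(t+2)$ on the right-hand side. Since $|E(G)| + |E(\overline{G})| = \binom{n}{2}$, the desired bound on $|E(G)|$ is therefore equivalent to the lower bound $|E(\overline{G})| \geq \binom{n-t-1}{2}$ on the complement.

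\emph{Invoking the hypothesis.} Next I would apply the graph complement conjecture (the hypothesis) to $G$, obtaining $\mu(\overline{G}) \geq n - 2 - t$, and hence $\mu(\overline{G}) + 1 \geq n - t - 1$. Combined with monotonicity of $\binom{\cdot}{2}$, a clean sufficient condition for the complement inequality above is the general statement
\[
|E(H)| \;\geq\; \binom{\mu(H)+1}{2}
\]
for every graph $H$, applied with $H = \overline{G}$: it would give $|E(\overline{G})| \geq \binom{\mu(\overline{G})+1}{2} \geq \binom{n-t-1}{2}$ in one stroke.

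\emph{Main obstacle.} The core work is therefore to establish this last inequality on $|E(H)|$. It would follow immediately if every graph $H$ contained $K_{\mu(H)+1}$ as a minor (since minors cannot add edges and $|E(K_{\mu(H)+1})| = \binom{\mu(H)+1}{2}$), which is the Hadwiger-type reverse of the inequality $h(H) \leq \mu(H) + 1$ deduced in the introduction from minor-monotonicity and $\mu(K_{m}) = m-1$. This reverse direction is by no means automatic---indeed $K_{3,3}$ already violates the na\"\i ve inequality, with $\mu(K_{3,3}) = 4$ but only $9 < \binom{5}{2}$ edges---so the actual argument must be more delicate. The main obstacle is to refine the above strategy either by observing that when $\overline{G}$ exhibits this ``bad'' behavior the hypothesis GCC must hold strictly (so the required bound $\binom{n-t-1}{2}$ is actually weaker than $\binom{\mu(\overline{G})+1}{2}$ and is attainable by other means, e.g.\ $|E(H)| \geq \binom{h(H)}{2}$), or by extracting the needed clique minor of $\overline{G}$ directly from a witnessing matrix of corank $\mu(\overline{G})$ using the Strong Arnold Hypothesis to be introduced in Section~2.
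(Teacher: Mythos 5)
Your reduction is exactly the paper's: pass to the complement via the identity $(t+1)n-\binom{t+2}{2}=\binom{n}{2}-\binom{n-t-1}{2}$, apply the hypothesis to get $\mu(\overline{G})+1\geq n-t-1$, and try to finish with $|E(\overline{G})|\geq\binom{\mu(\overline{G})+1}{2}$. But the proof is not complete: that last inequality is the entire content of the argument, you correctly observe that it is false as stated (with $K_{3,3}$, $\mu=4$ and $9<\binom{5}{2}$), and you then only sketch two possible repairs without carrying either out. The second repair (extracting a $K_{\mu(H)+1}$ minor from a witnessing matrix) provably cannot work, for the same reason: $h(K_{3,3})=4<\mu(K_{3,3})+1$, so no such clique minor exists in general. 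The fallback $|E(H)|\geq\binom{h(H)}{2}$ likewise only helps when $h(\overline{G})\geq\mu(\overline{G})+1$, which is exactly what fails. And your first repair rests on the unproven assertion that the graph complement inequality is strict whenever $\overline{G}$ is ``bad''; you give no argument for this.

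The missing ingredient is Theorem~\ref{pendavingh} (Pendavingh): for every \emph{connected} graph $H$, $|E(H)|\geq\binom{\mu(H)+1}{2}$ unless $H\cong K_{3,3}$, in which case $|E(H)|\geq\binom{\mu(H)+1}{2}-1$. So $K_{3,3}$ is the unique connected exception and it is off by only one edge. The paper applies this to the component of $\overline{G}$ attaining $\mu(\overline{G})$ (recall $\mu$ of a disconnected graph is the maximum over components, so the exceptional case is $\overline{G}\cong K_{3,3}$ plus isolated vertices, since any further edges restore the bound). In the generic case your chain of inequalities goes through verbatim; in the exceptional case one computes $\mu(G)$ and $\mu(\overline{G})$ explicitly and checks that the one-unit deficit is absorbed by the slack $\binom{n-\mu(G)}{2}-1\geq\binom{n-1-\mu(G)}{2}$. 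Without Pendavingh's theorem (or an equivalent classification of the graphs with $|E(H)|<\binom{\mu(H)+1}{2}$), your argument does not close.
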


Then in particular the graph complement conjecture for Colin de Verdi\`{e}re parameter would imply that all graphs $G$ are $2\mu(G)+2$-colorable. We will also show in Section 2 that:

\begin{observation}
\label{Obs:tight}
Let $H$ be any edge-maximal planar graph on at least 4 vertices and let $t \geq 3$ be an integer. Then let $G$ denote the join of $H$ and $K_{t-3}$. That is, $V(G)$ is the disjoint union of $V(H)$ and $V(K_{t-3})$, where the induced subgraph of $G$ with vertex set $V(H)$ is equal to $H$, the induced subgraph of $G$ with vertex set $V(K_{t-3})$ is complete, and for all vertices $u \in V(H)$ and $v \in V(K_{t-3})$, $uv \in E(G)$. Then $\mu(G) = t$ and $|E(G)| = t|V(G)|-\binom{t+1}{2}$.
\end{observation}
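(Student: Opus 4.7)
The plan is to verify the edge count and the value of $\mu(G)$ separately. For the edge count, an edge-maximal planar graph on $n := |V(H)| \geq 4$ vertices is a triangulation with $|E(H)| = 3n - 6$. Since $|V(G)| = n + (t-3)$ and $E(G)$ partitions into edges inside $H$, edges inside $K_{t-3}$, and join edges,
\[
|E(G)| = (3n - 6) + \binom{t-3}{2} + n(t-3),
\]
which a short algebraic manipulation rewrites as $t(n + t - 3) - \binom{t+1}{2} = t|V(G)| - \binom{t+1}{2}$.

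For the upper bound $\mu(G) \leq t$, I would iterate, $t - 3$ times, the standard suspension inequality for the Colin de Verdi\`{e}re parameter: if $G'$ is any graph and $G''$ is the join of $G'$ with $K_1$, then $\mu(G'') \leq \mu(G') + 1$ (see the survey \cite{survey}). Starting from $\mu(H) \leq 3$, which holds by Theorem 1.1 because $H$ is planar, this yields $\mu(G) \leq \mu(H) + (t - 3) \leq t$.

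For the lower bound $\mu(G) \geq t$, I would use the inequality $\mu(G) \geq h(G) - 1$ recorded in the introduction and show that $G$ contains a $K_{t+1}$ minor. First, $H$ contains a $K_4$ minor: if $n = 4$ this is immediate since $H = K_4$, and if $n \geq 5$ then $H$ is a $3$-connected planar triangulation, so fixing any triangular face $\{u, v, w\}$ and applying Menger's theorem between $\{u,v,w\}$ and any fourth vertex yields three internally disjoint paths whose contraction produces a $K_4$ minor. The four branch sets of this $K_4$ minor in $H$, together with the $t - 3$ singleton branch sets coming from $V(K_{t-3})$, form $t + 1$ pairwise adjacent branch sets in $G$: the singletons form a clique within $K_{t-3}$, and each singleton is adjacent to every $H$-branch set via the join edges. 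This gives $h(G) \geq t + 1$, hence $\mu(G) \geq t$.

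The main obstacle, in my view, is precisely locating and citing the suspension upper bound in the literature, since the exact form needed is a folklore consequence of the definition of $\mu$ rather than a headline theorem; the edge-count computation and the $K_4$-minor argument in $H$ are standard.
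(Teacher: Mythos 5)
Your proposal is correct. The edge count checks out, and both bounds on $\mu(G)$ are sound. The upper bound is exactly the paper's mechanism: the ``suspension inequality'' you were worried about locating is item (iii) of Theorem 2.4 in this paper, $\mu(G) \leq \mu(G-v)+1$, quoted from the survey of van der Holst, Lov\'{a}sz, and Schrijver, so that concern is moot. Where you diverge is the lower bound. The paper gets both directions in a single induction on $t$, because Theorem 2.4(iii) also contains the equality case: if $v$ is adjacent to all other vertices and $E(G)\neq\emptyset$, then $\mu(G)=\mu(G-v)+1$. Peeling off the $t-3$ dominating vertices one at a time therefore gives $\mu(G)=\mu(H)+(t-3)=t$ directly (with $\mu(H)=3$ since $H$ is planar and contains $K_4$ as a minor), and the same induction carries the edge count via $(t-1)(n-1)-\binom{t}{2}+(n-1)=tn-\binom{t+1}{2}$. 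You instead prove $\mu(G)\geq t$ by exhibiting a $K_{t+1}$ minor (a $K_4$ minor in the triangulation $H$ plus the $t-3$ singletons) and invoking $\mu(G)\geq h(G)-1$; this is a fine and self-contained alternative, at the cost of the small detour through $3$-connectivity of triangulations and Menger, whereas the paper's route uses only the one identity it has already stated. Your direct computation of $|E(G)|$ is also slightly more explicit than the paper's inductive bookkeeping, but both amount to the same arithmetic.
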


So for every positive integer $t$, Conjecture \ref{mainConjecture} is tight for infinitely many graphs. We say a graph $G$ is \textit{chordal} if for every cycle $C$ of $G$ of length greater than 3, the induced subgraph of $G$ with vertex set $V(C)$ has some edge that is not in $E(C)$. The main result we prove is Theorem \ref{mainThm}:

\begin{theorem}
\label{mainThm}
Suppose $G$ is a graph such that either:
\begin{itemize}
\itemsep0em
\item $G$ is chordal, or
\item $\overline{G}$ is chordal, or
\item $\mu(G) \leq 7$, or 
\item $\mu(G) \geq |V(G)|-6$.
\end{itemize}
Then $|E(G)| \leq \mu(G)|V(G)|-\binom{\mu(G)+1}{2}$.
\end{theorem}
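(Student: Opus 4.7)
I would split the proof into the four cases of the hypothesis and handle each with its own technique. A convenient equivalent formulation of the desired bound is $|E(\overline{G})| \geq \binom{|V(G)| - \mu(G)}{2}$, which is useful when working with the complement.

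\textbf{Chordal case.} I would induct on $|V(G)|$. The base case $|V(G)| = \mu(G)$ is immediate since $|E(G)| \leq \binom{\mu(G)}{2} = \mu(G)^2 - \binom{\mu(G)+1}{2}$. For the inductive step, take a simplicial vertex $v$ of $G$; since $G$ is chordal, such a $v$ exists. Its neighborhood is a clique, so $d_G(v) \leq \omega(G) - 1$, where $\omega(G)$ denotes the clique number. Minor-monotonicity of $\mu$ together with $\mu(K_m) = m - 1$ gives $\omega(G) - 1 \leq h(G) - 1 \leq \mu(G)$. Since $G - v$ is chordal with $\mu(G - v) \leq \mu(G)$, inductively $|E(G - v)| \leq \mu(G)(|V(G)| - 1) - \binom{\mu(G)+1}{2}$; adding $d_G(v) \leq \mu(G)$ gives the bound.

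\textbf{Co-chordal case.} I would attempt the dual induction using a simplicial vertex $v$ of $\overline{G}$. Writing $d_G(v) = |V(G)| - 1 - d_{\overline{G}}(v)$, we need $d_G(v) \leq \mu(G)$ for a clean induction, i.e.\ $v$ should lie in a clique of $\overline{G}$ of size at least $|V(G)| - \mu(G)$. This can fail (for instance $G = \overline{P_5}$ has $\mu(G) + \alpha(G) = 4 < 5 = |V(G)|$), so a finer analysis is needed. I would use the clique-tree structure of $\overline{G}$ together with the fact that $\mu$ is well-understood on chordal graphs, in particular $\mu(\overline{G}) = \omega(\overline{G}) - 1$, combining these with a Nordhaus-Gaddum-style lower bound on $\mu(G)$; the missing unit in boundary cases would need to be recovered by showing that deletion of $v$ forces $\mu(G)$ to drop, so that the inductive step gains extra slack.

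\textbf{Cases on $\mu(G)$.} For $\mu(G) \leq 5$, Mader's Theorem \ref{hadwigerEdgeBound} applied with $h(G) \leq \mu(G) + 1 \leq 6$ directly yields the bound. For $\mu(G) \in \{6, 7\}$, $K_{\mu(G)+2}$-minor-freeness alone does not give the sharpest known edge bound, so I would also appeal to additional Colin de Verdi\`ere–specific structural information beyond minor-freeness. For $\mu(G) \geq |V(G)| - 6$, set $k = |V(G)| - \mu(G) \leq 6$; the equivalent formulation reduces to showing $|E(\overline{G})| \geq \binom{k}{2}$, and I would use characterizations of graphs with $\mu$ close to $|V(G)| - 1$ (due to Kotlov and others) to restrict $\overline{G}$ to a small finite list of configurations and verify the inequality in each.

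The main obstacle is the co-chordal case: the naive simplicial-vertex induction misses the bound by exactly $1$ in extremal examples, and recovering that unit appears to require a careful analysis of the clique tree of $\overline{G}$ together with control over how $\mu(G)$ changes under vertex deletion. The high-$\mu(G)$ case is also delicate, as it depends on having enough structural information about graphs attaining the upper range of $\mu$.
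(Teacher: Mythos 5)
Your chordal case is essentially the paper's argument (simplicial vertex of degree at most $\omega(G)-1\le\mu(G)$, plus induction), and your $\mu(G)\le 5$ subcase via Mader is also what the paper does. But the other three cases are only sketches, and in each the missing idea is the actual content of the proof.

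For the co-chordal case, you correctly name the ingredients ($\mu(\overline{G})=\omega(\overline{G})-1$ for chordal $\overline{G}$ outside an exceptional case, and the Nordhaus--Gaddum bound of Mitchell--Yengulalp for chordal graphs), but the mechanism you propose --- recovering the missing unit by ``showing that deletion of $v$ forces $\mu(G)$ to drop'' --- is not how the slack is obtained, and it is not clear it can be. The paper instead works with a vertex-minimal counterexample, where Lemma \ref{basicCE} forces $\Delta(\overline{G})\le\mu(\overline{G})$; this both rules out the Fallat--Mitchell exceptional case (so $\mu(\overline{G})=\omega(\overline{G})-1$ genuinely holds) and forces a maximum clique $S$ of $\overline{G}$ to be joined to nothing outside $S$. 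One then \emph{contracts} an edge $uv$ of $G$ with $u\in S$, $v\notin S$: the contracted vertex dominates $G/uv$, so $\mu(G-\{u,v\})=\mu(G/uv)-1\le\mu(G)-1$, and deleting two vertices while losing a full unit of $\mu$ is exactly what closes the count. Without this (or an equivalent) device your induction does miss by one, as you observe.

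For $\mu(G)\in\{6,7\}$, ``additional Colin de Verdi\`ere--specific structural information'' needs to be made concrete. The paper uses J{\o}rgensen's and Song--Thomas's extremal theorems for $K_8$- and $K_9$-minor-free graphs: any graph exceeding the desired bound must contain $K_{2,2,2,2,2}$ (resp.\ $K_{1,2,2,2,2,2}$ or $K_{2,2,2,3,3}$) as a subgraph, and these have $\mu\ge 7$ (resp.\ $\ge 8$) because their complements are outerplanar or planar and the Kotlov--Lov\'asz--Vempala complement theorem applies. For $\mu(G)\ge|V(G)|-6$, your plan of restricting $\overline{G}$ to a finite list via characterizations of graphs with large $\mu$ does not go through: such characterizations are only available for $\mu\ge|V(G)|-3$, while the hard cases are $c:=|V(G)|-\mu(G)\in\{5,6\}$. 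The paper argues differently: if the bound fails then $|E(\overline{G})|<\binom{c}{2}$, yet $\mu(\overline{G})\ge c-2$ forces $\overline{G}$ to contain a subdivision of $K_4$/$K_{2,3}$ (for $c=5$) or $K_5$/$K_{3,3}$ (for $c=6$); a degree-sum count over branch vertices in such a sparse complement with $\delta(\overline{G})\ge 1$ gives a contradiction in all but the $K_{3,3}$-subdivision subcase, which is finished by deleting two branch vertices and applying the $\mu\ge|V(G)|-3$ criterion to the rest. These counting arguments, not a finite case check, are what carry the large-$\mu$ case.
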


Note that it is equivalent to say that for such graphs, for every integer $t$ with $\mu(G) \leq t \leq n$, $|E(G)| \leq t|V(G)|-\binom{t+1}{2}$. We also note that the analog of Theorem \ref{mainThm} for the Hadwiger number is false. For $n_1, n_2, \ldots, n_k \in \mathbb{Z}^+$, let $K_{n_1, n_2, \ldots, n_k}$ denote the complete multipartite graph with independent sets of size $n_1, n_2, \ldots, n_k$. Every complete multipartite graph has chordal complement. Furthermore, as observed in the literature (see \cite{MaderEdgeBound} and  \cite{hadwigerSurvey}), $K_{2,2,2,2,2}$ has $h(K_{2,2,2,2,2}) = 7$, yet $|E(K_{2,2,2,2,2})| > 6|V(K_{2,2,2,2,2})|-\binom{6+1}{2}$.

\section{Definitions and Preliminaries}
In this section we begin by briefly introducing our notation. Then we state the definition and some basic facts on the Colin de Verdi\`{e}re parameter, prove the observations from the introduction, and prove two lemmas that will be used in both of the next sections. In Section 3 we prove our main theorem, Theorem \ref{mainThm}, for chordal graphs and the complement of chordal graphs. Finally, in Section 4 we prove Theorem \ref{mainThm} for graphs $G$ with $\mu(G) \leq 7$ or $\mu(G) \geq |V(G)|-6$.

Let $G$ be a graph. We will write an edge connecting vertices $u$ and $v$ as $uv$. We write $\delta(G)$ for the minimum degree, $\Delta(G)$ for the maximum degree, and $\omega(G)$ for the clique number of $G$. The set of vertices adjacent to a vertex $v$ is denoted $N(v)$. The degree of a vertex $v$ in $G$ is written $d_G(v)$, or simply $d(v)$ if the graph is understood from context. For $S \subseteq V(G)$, we write $G[S]$ for the induced subgraph of $G$ with vertex set $S$, and $G-S$ for the induced subgraph of $G$ with vertex set $V(G)-V(S)$. For a vertex $v$ we will write $G-v$ for $G - \{v\}$. If $e$ is an edge of $G$, we write $G/e$ for the graph obtained from $G$ by contracting $e$ and deleting all parallel edges. We will use $A \coloneqq B$ to mean that $A$ is defined to be $B$.

Next we give the definition of the Colin de Verdi\`{e}re parameter. Let $n$ be the number of vertices of $G$. It will be convenient to assume that $V(G) = \{1,2,\ldots, n\}$ and that $G$ is connected. If $G$ is not connected, then define $\mu(G)$ to be the maximum among all connected components $H$ of $G$ of $\mu(H)$. We denote $I \coloneqq \{ii: i \in \{1,2,\ldots, n\}\}$.

\begin{definition}
The Colin de Verdi\`{e}re parameter $\mu(G)$ is the maximum corank of any real, symmetric $n \times n$ matrix $M$ such that:
\begin{enumerate}
\item $M_{i,j} = 0$ if $ij \notin E(G)\cup I$, and $M_{ij} < 0$ if $ij \in E(G)$.
\item $M$ has exactly one negative eigenvalue.
\item If $X$ is a symmetric $n \times n$ matrix such that $MX=0$ and $X_{ij} = 0$ for $ij \in E\cup I$, then $X=0$.
\end{enumerate}
\end{definition}

From the survey of van der Holst, Lov\'{a}sz, and Schrijver, we have:
\begin{theorem}
\label{basicCDV}
\cite{survey} Let $G$ be a graph, let $H$ be a minor of $G$, and let $v \in V(G)$. Then
\begin{enumerate}[(i)]
\itemsep0em
\item $\mu(H) \leq \mu(G)$
\item For every positive integer $t$, $\mu(K_t)=t-1$. 
\item $\mu(G) \leq \mu(G-v)+1$. If $N(v) = V(G)-\{v\}$ and $E(G) \neq \emptyset$ then $\mu(G) = \mu(G-v)+1$. 
\end{enumerate}
\end{theorem}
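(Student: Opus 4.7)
The plan is to handle the three items in turn, doing (ii) as a warm-up, then (i) which does the bulk of the work, and finally (iii) which uses ideas from both.

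For (ii), I would exhibit $M \coloneqq -J$, the negative of the $t \times t$ all-ones matrix. Every off-diagonal entry is $-1 < 0$, matching $E(K_t)$; the eigenvalues are $-t$ (simple) and $0$ (with multiplicity $t-1$), so $M$ has exactly one negative eigenvalue and corank $t-1$. The Strong Arnold Hypothesis holds vacuously, since $E(K_t) \cup I$ covers every position and so the only symmetric $X$ with $X_{ij} = 0$ there is $X = 0$. This gives $\mu(K_t) \geq t-1$, and the matching upper bound holds for any graph on $t$ vertices because every admissible matrix has at least one negative eigenvalue, hence corank at most $t-1$.

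For (i), it suffices to check monotonicity under the three elementary minor operations: deleting an edge, deleting an isolated vertex, and contracting an edge. In each case one starts with an optimal matrix $M'$ for the minor $H$ and produces an admissible matrix $M$ for $G$ of at least the same corank. For edge deletion $H = G - uv$, perturb $M'$ by placing a small negative $-\varepsilon$ in the $(u,v)$ and $(v,u)$ entries: condition 1 for $G$ then holds by construction, the single negative eigenvalue persists by continuity for small $\varepsilon$, and the SAH at $M'$ supplies the transversality needed to show the corank does not drop. Deleting an isolated vertex $v$ just adds a zero row and column, which raises the corank by exactly $1$. For edge contraction $H = G/uv$, lift $M'$ by splitting the contracted index into two indices $u,v$, inserting a negative off-diagonal entry $M_{uv}$, and choosing the new diagonal entries so that conditions 1, 2, and the SAH all propagate from $M'$; this preserves the corank.

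For (iii), the inequality $\mu(G) \leq \mu(G-v) + 1$ comes from taking an optimal $M$ for $G$ and passing to the principal submatrix $M'$ indexed by $V(G) - \{v\}$. By Cauchy interlacing $M'$ has at most one negative eigenvalue, and removing a symmetric row/column drops the rank by at most $2$, so $\operatorname{corank}(M') \geq \operatorname{corank}(M) - 1$; a generic perturbation of $M$ inside the admissible set for $G$ ensures $M'$ has exactly one negative eigenvalue and satisfies the SAH. For the equality case with $v$ universal and $E(G) \neq \emptyset$, argue in the other direction: take an optimal $M'$ for $G - v$ and extend to an $n \times n$ matrix $M$ by appending a row and column for $v$ with small negative off-diagonal entries (legal because $v$ is adjacent to every other vertex), and choose the $(v,v)$ diagonal entry so that $M$ acquires an additional kernel vector supported on $v$. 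The hypothesis $E(G) \neq \emptyset$ guarantees $M'$ already contributes the one negative eigenvalue, so $M$ still has exactly one.

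The main obstacle throughout is preserving the Strong Arnold Hypothesis, since conditions 1 and 2 reduce to continuity statements while the SAH is a transversality condition that can fail under naive perturbations. I expect the edge contraction case of (i) to be the technically hardest step, since it requires an explicit analysis of how the kernel of $M'$ and its transverse space split across the two endpoints of the contracted edge when lifting $M'$ back to an admissible matrix for $G$.
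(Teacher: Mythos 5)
The paper itself offers no proof of this theorem: it is quoted from the survey \cite{survey}, so the only comparison available is with the standard arguments there and in \cite{cdvIntro}. Your part (ii) is correct and complete: $-J$ is an admissible matrix for $K_t$ of corank $t-1$, and the requirement of exactly one negative eigenvalue forces corank at most $t-1$ for any admissible matrix. The remainder, however, is an outline in which the steps that constitute the actual content of the theorem are asserted rather than carried out, and two of the assertions are false as stated.

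First, in part (i), bordering an optimal matrix $M'$ for $H=G-v$ (with $v$ isolated in $G$) by a zero row and column does \emph{not} yield an admissible matrix: if $z$ is a kernel vector of $M'$, the symmetric matrix $X$ whose $v$-column equals $z$ and which vanishes elsewhere is supported off $E(G)\cup I$ and satisfies $MX=0$, so the Strong Arnold Hypothesis fails; moreover the conclusion you would draw, namely $\mu(G)\geq \mu(G-v)+1$ for isolated $v$, is false (e.g.\ $\mu(K_3\cup K_1)=2=\mu(K_3)$). Under this paper's convention that $\mu$ of a disconnected graph is the maximum over its components, that case is disposed of by definition, not by a matrix construction. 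Second, in part (iii) the principal submatrix $M'$ of an optimal $M$ may have no negative eigenvalue and may violate the Strong Arnold Hypothesis, and ``a generic perturbation of $M$ inside the admissible set'' is not an argument; indeed no argument of this shape can succeed under the paper's conventions, since for $G=K_{1,3}$ with $v$ the center one has $\mu(G)\geq 2$ while $\mu(G-v)=\mu(\overline{K_3})=0$ by the max-over-components convention (the survey proves the inequality under its own convention for edgeless graphs, where $\mu(\overline{K_3})=1$). The clean route to the inequality is to observe that $G$ is a subgraph of the graph obtained from $G-v$ by adding a universal vertex, and then to combine subgraph monotonicity with the universal-vertex equality. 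Third, the edge-contraction case of (i) and that universal-vertex equality --- precisely the places where one must show that conditions 1, 2, and the Strong Arnold Hypothesis propagate --- are left as goals (``choosing the new diagonal entries so that \ldots all propagate''); this is the hard part of Colin de Verdi\`ere's theorem and requires the limiting and transversality arguments of \cite{cdvIntro} and \cite{survey}. As written, the proposal is a roadmap whose most difficult legs are missing and one of whose legs points the wrong way.
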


Then Observation \ref{Obs:tight}, which we restate below, follows from induction on $t$ by (iii) above and noting that for any positive integers $t \geq 3$ and $n$, $(t-1)(n-1)-\binom{t}{2}+n-1 = tn-\binom{t+1}{2}$.

\newtheorem*{Obs:tight}{Observation \ref{Obs:tight}}
\begin{Obs:tight}
Let $H$ be any edge-maximal planar graph on at least 4 vertices and let $t \geq 3$ be an integer. Let $G$ denote the join of $H$ and $K_{t-3}$. Then $\mu(G)=t$ and $|E(G)| = t|V(G)|-\binom{t+1}{2}$.
\end{Obs:tight}

To relate the extremal problem to the graph complement conjecture for Colin de Verdi\`{e}re parameter, and for the next two sections, it will be convenient to state the following lemma.

\begin{lemma}
\label{complement}
Let $G$ be a graph on $n$ vertices and let $t$ be an integer with $n \geq t$. Then $|E(G)| \leq tn-\binom{t+1}{2}$ if and only if $|E(\overline{G})| \geq \binom{n-t}{2}$.
\end{lemma}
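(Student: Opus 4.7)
The plan is to reduce the claimed equivalence to a single algebraic identity via the fundamental fact that $|E(G)| + |E(\overline{G})| = \binom{n}{2}$. First I would rewrite the inequality $|E(G)| \leq tn - \binom{t+1}{2}$ by substituting $|E(G)| = \binom{n}{2} - |E(\overline{G})|$, which after rearrangement is equivalent to
\[
|E(\overline{G})| \;\geq\; \binom{n}{2} - tn + \binom{t+1}{2}.
\]
So the lemma will follow immediately once I verify the identity
\[
\binom{n}{2} - tn + \binom{t+1}{2} \;=\; \binom{n-t}{2}.
\]

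To check this identity I would simply expand both sides: the left-hand side equals $\tfrac{1}{2}(n^2 - n - 2tn + t^2 + t)$, while the right-hand side equals $\tfrac{1}{2}\big((n-t)(n-t-1)\big) = \tfrac{1}{2}(n^2 - 2tn + t^2 - n + t)$, and these agree term by term. The hypothesis $n \geq t$ is only needed to ensure that $\binom{n-t}{2}$ is interpreted as a non-negative integer in the usual way (it equals $0$ when $n = t$ or $n = t+1$, consistent with the equality case in the extremal bound).

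There is no real obstacle here; the statement is a repackaging of the extremal edge bound in terms of the complement, and the only thing to do is the one-line binomial calculation above. The utility of this form is that it lets subsequent arguments work directly with $\overline{G}$, which will be convenient in Section 3 when $\overline{G}$ is chordal and in Section 4 when the graph complement conjecture for the Colin de Verdi\`ere parameter is invoked.
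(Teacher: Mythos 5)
Your proof is correct and follows essentially the same route as the paper, which simply observes that $\binom{n-t}{2}+tn-\binom{t+1}{2} = \binom{n}{2} = |E(G)|+|E(\overline{G})|$; your version just writes out the binomial expansion explicitly. Nothing is missing.
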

\begin{proof}
Observe that $\binom{n-t}{2}+tn-\binom{t+1}{2} = \binom{n}{2} = |E(G)|+|E(\overline{G})|$.
\end{proof}

We will also need the following theorem of Pendavingh (Theorem 5).

\begin{theorem} \cite{Pendavingh}
\label{pendavingh}
If $G$ is a connected graph, then either $|E(G)| \geq \binom{\mu(G)+1}{2}$ or $|E(G)| \geq \binom{\mu{G}+1}{2}-1$ and $G$ is isomorphic to $K_{3,3}$.
\end{theorem}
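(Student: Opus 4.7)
I would attempt induction on $n := |V(G)|$, setting $t := \mu(G)$. For the base case $n = t+1$, Definition~2.1 requires a real symmetric matrix $M$ of corank $t$ on $t+1$ coordinates, so $\mathrm{rank}(M) = 1$; together with the ``exactly one negative eigenvalue'' condition, $M = -xx^{T}$ for some nonzero $x \in \mathbb{R}^{t+1}$. The Strong Arnold Hypothesis (condition~3 of the definition) prevents any coordinate of $x$ from vanishing, so the sign condition $M_{ij} < 0 \iff ij \in E(G)$ forces $G = K_{t+1}$, giving $|E(G)| = \binom{t+1}{2}$ as required.

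For the inductive step with $n \geq t+2$, Theorem~\ref{basicCDV}(i) gives $\mu(G-v) \in \{t-1, t\}$ for every $v \in V(G)$. If some $v$ satisfies both $G-v$ connected and $\mu(G-v) = t$, I apply the induction hypothesis to $G-v$: either $|E(G-v)| \geq \binom{t+1}{2}$ and we are done, or $G-v \cong K_{3,3}$ (so $t = 4$ and $n = 7$), in which case the at-least-one edge from $v$ into $G-v$ (by connectivity of $G$) pushes $|E(G)|$ up to $10 = \binom{5}{2}$. Otherwise $\mu(G-v) = t-1$ for every $v$ with $G-v$ connected; the disconnected subcase can be reduced by induction applied separately to each component. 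In this main subcase I would try to establish the degree bound $\delta(G) \geq t$, from which the handshake lemma yields $|E(G)| \geq tn/2 \geq \binom{t+1}{2}$ directly. The degree bound I would prove contrapositively: from $d(v) < t$, I would try to convert a corank-$t$ witness $M$ of $G$ into a corank-$t$ witness of $G-v$ by zeroing out the $v$-row and $v$-column (possibly at the cost of a small rank-one perturbation to preserve the Arnold condition), which would contradict the case assumption $\mu(G-v) = t-1$.

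\textbf{Main obstacle.} The key difficulty, and the source of the $K_{3,3}$ exception, is that $K_{3,3}$ is $3$-regular with $\mu = 4$, so the clean statement ``$\delta(G) \geq t$'' fails there. I expect the matrix-perturbation argument to break down precisely when the sign constraints and Strong Arnold Hypothesis force $G$ into a rigid bipartite regular structure with the exact parameters of $K_{3,3}$. Isolating that configuration and verifying it is the unique bad case, rather than a generic family, is the step I would spend the most care on; doing it cleanly might require a separate analysis of the ``low-degree vertex'' link in the matrix realizing $\mu(G) = t$, much as Kotlov--Lov\'asz--Vempala's sphere-representation tools handle similar rigidity phenomena.
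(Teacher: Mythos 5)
The paper offers no proof of this statement: it is quoted verbatim as Theorem~5 of Pendavingh \cite{Pendavingh}, so there is nothing in-paper to compare against, and your proposal must stand on its own. It does not. The fatal step is the ``main subcase,'' where you assume $\mu(G-v)=t-1$ for every $v$ with $G-v$ connected and then claim $\delta(G)\geq t$ (with $K_{3,3}$ the lone exception), proved contrapositively via the assertion that deleting a vertex of degree less than $t$ preserves $\mu$. That assertion is false, and so is the degree bound it is meant to establish. Take $G$ to be $K_5$ with one edge $uv$ subdivided by a new vertex $w$. Contracting $uw$ gives a $K_5$ minor, so $\mu(G)\geq 4$; $G$ has only $11$ edges and hence no minor in the Petersen family, so $\mu(G)=4=t$. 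Every one-vertex deletion of $G$ is planar ($K_5$ minus an edge, or a subdivided/pendant-augmented $K_4$), so $\mu(G-v)=3=t-1$ for all six vertices: $G$ sits squarely in your main subcase. Yet $d(w)=2<t$, and the handshake bound you want, $|E(G)|\geq tn/2=12$, exceeds the actual $|E(G)|=11$ (the theorem itself survives only because $11\geq\binom{5}{2}=10$). In particular $w$ witnesses that $d(v)<\mu(G)$ does not force $\mu(G-v)=\mu(G)$: here $\mu(G-w)=\mu(K_5-uv)=3$. The only general fact available is Theorem~\ref{basicCDV}(iii), $\mu(G)\leq\mu(G-v)+1$; zeroing out a row and column of a witness matrix changes the rank by up to two and can destroy both the single-negative-eigenvalue condition and the Strong Arnold Hypothesis, so there is no corank-preserving surgery of the kind you describe.

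The surrounding scaffolding is mostly sound --- the base case $n=t+1$ forcing $G=K_{t+1}$ is fine (though it is the sign pattern and connectivity, not the Arnold condition, that force all coordinates of $x$ to be nonzero), and the branch where some connected $G-v$ retains $\mu(G-v)=t$ inducts correctly. But the example above shows the residual case is not ``$\delta\geq t$ or $G\cong K_{3,3}$''; it contains graphs of minimum degree $2$ with $\mu$ arbitrarily large (subdivide one edge of $K_{t+1}$). Closing the gap requires a genuinely different mechanism --- Pendavingh's own argument routes through the relationship between $\mu$ and other minor-monotone parameters rather than a minimum-degree count --- so the proposal as written does not yield the theorem.
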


Now we are ready to prove:

\newtheorem*{Obs:GCCimplies}{Observation \ref{Obs:GCCimplies}}
\begin{Obs:GCCimplies}
If for every graph $G$, $\mu(G)+\mu(\overline{G}) \geq |V(G)|-2$, then every graph $G$ has $|E(G)| \leq (\mu(G)+1)|V(G)| - \binom{\mu(G)+2}{2}$.
\end{Obs:GCCimplies}

\begin{proof}
Let $G$ be a graph on $n$ vertices. Since $\mu(\overline{G})$ is the maximum Colin de Verdi\`{e}re parameter of any connected component of $G$, by Theorem \ref{pendavingh} either $\overline{G}$ is isomorphic to the disjoint union of $K_{3,3}$ and an independent set of vertices, or $|E(\overline{G})| \geq \binom{\mu(\overline{G})+1}{2}$. In the latter case, $|E(\overline{G})| \geq \binom{\mu(\overline{G})+1}{2} \geq \binom{n-1-\mu(G)}{2}$. So by Lemma \ref{complement}, we are done.

If $\overline{G}$ is isomorphic to the disjoint union of $K_{3,3}$ and a set of $k$ independent vertices, then $\mu(\overline{G}) = 3$ and by (iii) of Theorem \ref{basicCDV} and since $\mu(\overline{K_{3,3}}) = 2$, $\mu(G) = k+2$. So then $\mu(G)+\mu(\overline{G}) = n-1$. So $$|E(\overline{G})| \geq \binom{\mu(\overline{G})+1}{2}-1 = \binom{n-\mu(G)}{2}-1 \geq \binom{n-1-\mu(G)}{2}$$
and again we are done by Lemma \ref{complement}.
\end{proof}

We finish this section by proving some basic facts about a counterexample to the main Conjecture \ref{mainConjecture} such that every induced subgraph on one less vertex satisfies the conjecture. This lemma will be used in Sections 3 and 4 to help prove our main Theorem \ref{mainThm}.

\begin{lemma}
\label{basicCE}
Let $G$ be an $n$-vertex graph with $|E(G)| > \mu(G)n-\binom{\mu(G)+1}{2}$. Suppose also that for every $x \in V(G)$, $|E(G-x)|\leq \mu(G-x)(n-1)-\binom{\mu(G-x)+1}{2}$. Then $\mu(G) < \delta(G)\leq \Delta(G) < n-1$.
\end{lemma}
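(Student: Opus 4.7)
The plan is to prove the outer two inequalities $\mu(G) < \delta(G)$ and $\Delta(G) < n-1$ separately, since $\delta(G) \leq \Delta(G)$ is automatic. Set $t \coloneqq \mu(G)$. Two preliminary facts will be useful: $t \geq 1$ (otherwise $\mu(G)n - \binom{\mu(G)+1}{2} = 0$, yet $|E(G)| > 0$ would still force $\mu(G) \geq 1$), and $n \geq t+1$ (every $n$-vertex graph is a minor of $K_n$, so $\mu(G) \leq \mu(K_n) = n-1$ by Theorem \ref{basicCDV}). Both target inequalities will be proved by contradiction: assuming a bad-degree vertex $x$, I delete $x$, apply the minimality hypothesis to $G-x$, add $d(x)$ back, and then check that the resulting bound contradicts $|E(G)| > tn - \binom{t+1}{2}$.

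For $\mu(G) < \delta(G)$, suppose some $x$ has $d(x) \leq t$. Theorem \ref{basicCDV}(i) and (iii) leave only $\mu(G-x) = t$ or $\mu(G-x) = t-1$. In the first case the minimality hypothesis yields $|E(G)| \leq t(n-1) - \binom{t+1}{2} + t = tn - \binom{t+1}{2}$ immediately. In the second case it gives $|E(G)| \leq (t-1)(n-1) - \binom{t}{2} + t$, and a short manipulation shows this is $\leq tn - \binom{t+1}{2}$ precisely when $t \leq n-1$, which is the preliminary fact above. Both subcases contradict the hypothesis on $|E(G)|$, so $\delta(G) \geq t+1$.

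For $\Delta(G) < n-1$, suppose $v$ is a universal vertex. Since $t \geq 1$ we have $E(G) \neq \emptyset$, so the equality clause of Theorem \ref{basicCDV}(iii) forces $\mu(G-v) = t-1$. Applying the minimality hypothesis to $G-v$ then gives
\[
|E(G)| = |E(G-v)| + (n-1) \leq (t-1)(n-1) - \binom{t}{2} + (n-1) = tn - \binom{t+1}{2},
\]
a contradiction. I do not expect any genuine obstacle in this proof; the only pieces of care needed are (a) remembering that Theorem \ref{basicCDV}(iii) gives equality, not just inequality, once $v$ is universal and $E(G) \neq \emptyset$, and (b) in the minimum-degree argument, handling both subcases $\mu(G-x) \in \{t-1,t\}$ rather than only the natural-looking one $\mu(G-x) = t$.
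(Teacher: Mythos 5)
Your proposal is correct and follows essentially the same route as the paper: delete a low-degree or universal vertex, apply the hypothesis to $G-x$, and use Theorem \ref{basicCDV}(iii) (including its equality clause for a universal vertex) to reach a contradiction. The only cosmetic difference is that you split the case $\mu(G-x)\in\{t-1,t\}$ into two explicit subcases where the paper absorbs both into one chain of inequalities via $\mu(G-v)\leq\mu(G)\leq n-1$.
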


\begin{proof}
Suppose $v$ is a vertex of $G$ with $d(v) \leq \mu(G)$. Then by Theorem \ref{basicCDV}, we have $\mu(G-v) \in \{\mu(G), \mu(G)-1\}$ and $\mu(G) \leq n-1$. Then
\begin{multline*}|E(G)| =|E(G-v)|+d(v) \leq \mu(G-v)(n-1)-\binom{\mu(G-v)+1}{2}+\mu(G) \\
\leq \mu(G)(n-1)- \binom{\mu(G)+1}{2}+\mu(G) = \mu(G)n-\binom{\mu(G)+1}{2}
\end{multline*}
a contradiction.

If $u$ is a vertex with $d(u) = n-1$, first note that $E(G) \neq \emptyset$. Then by (iii) of Theorem \ref{basicCDV}, $\mu(G-u) = \mu(G)-1$, and so 
$$|E(G)| = |E(G-u)| + n-1\leq (\mu(G)-1)(n-1)-\binom{\mu(G)}{2}+n-1 = \mu(G)n-\binom{\mu(G)+1}{2}$$
a contradiction.
\end{proof}

\section{Chordal Graphs and Complements of Chordal Graphs}
In this section we will show that if $G$ is a graph such that $G$ is chordal or $\overline{G}$ is chordal, then $|E(G)| \leq \mu(G)|V(G)|-\binom{\mu(G)+1}{2}$. Define a \textit{simplicial} vertex of a graph $G$ to be a vertex $v$ such that $G[N(v)]$ is a complete graph. We will use the fact that every chordal graph has a simplicial vertex.

\begin{lemma}
If $G$ is a chordal graph then $|E(G)| \leq \mu(G)|V(G)| - \binom{\mu(G)+1}{2}$.
\end{lemma}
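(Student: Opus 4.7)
The plan is to argue by contradiction via Lemma~\ref{basicCE}, leveraging the defining structural feature of chordal graphs: the existence of a simplicial vertex. Suppose the statement fails, and let $G$ be a chordal counterexample minimizing $|V(G)|$. Since chordality is preserved under taking induced subgraphs, every $G - x$ is chordal, so by the minimality of $G$, each $G - x$ satisfies the conjectured bound. Consequently the hypotheses of Lemma~\ref{basicCE} are met, and we deduce $\delta(G) > \mu(G)$.

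On the other hand, $G$ has a simplicial vertex $v$, i.e., a vertex whose open neighborhood $N(v)$ induces a complete graph. Then $\{v\} \cup N(v)$ induces $K_{d(v)+1}$ in $G$, so by parts (i) and (ii) of Theorem~\ref{basicCDV},
\[
\mu(G) \;\geq\; \mu(K_{d(v)+1}) \;=\; d(v) \;\geq\; \delta(G).
\]
This contradicts $\delta(G) > \mu(G)$, finishing the proof.

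I do not anticipate a real obstacle in executing this plan: once Lemma~\ref{basicCE} is in hand, all that is needed is the standard fact that every chordal graph contains a simplicial vertex, together with the observation that the closed neighborhood of such a vertex is a clique. The real work of the argument is packaged inside Lemma~\ref{basicCE}, and the chordal case essentially reduces to the bookkeeping inequality $\delta(G) \leq \omega(G)-1 \leq \mu(G)$. If instead one wanted a direct induction on $|V(G)|$, the same observation $d(v) \leq \mu(G)$ at a simplicial vertex would let one combine $|E(G)| = |E(G-v)| + d(v)$ with the inductive bound for the chordal graph $G - v$; the only routine check there is that the difference $\binom{t+1}{2} - \binom{t'+1}{2} = (t-t')(t+t'+1)/2$ (with $t = \mu(G)$ and $t' = \mu(G-v) \in \{t-1, t\}$ by Theorem~\ref{basicCDV}(iii)) is dominated by $(t-t')(n-1)$, which follows from $t \leq n-1$.
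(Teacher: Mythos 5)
Your proof is correct and follows essentially the same route as the paper: take a vertex-minimal counterexample, apply Lemma~\ref{basicCE} (valid since induced subgraphs of chordal graphs are chordal) to get $\delta(G) > \mu(G)$, and contradict this at a simplicial vertex via $d(v) \leq \omega(G) - 1 \leq \mu(G)$.
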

\begin{proof}
Let $G$ be a vertex-minimal counterexample. Let $u$ be a simplicial vertex of $G$. Then $d(u) \leq \omega(G)-1 \leq \mu(G)$. This is a contradiction to Lemma \ref{basicCE} since every induced subgraph of a chordal graph is chordal and $G$ is a vertex-minimal counterexample.
\end{proof}

For graphs with chordal complement, we need to introduce the following two theorems. Mitchell and Yengulalp showed that:

\begin{theorem}
\cite{chordalGraphs}
\label{chordalBound}
If $G$ is a chordal graph, then $\mu(G)+\mu(\overline{G}) \geq |V(G)|-2$.
\end{theorem}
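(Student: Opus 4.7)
My approach would be induction on $n \coloneqq |V(G)|$. The base case $n \leq 2$ is immediate, and if $G$ is complete then $\mu(G) = n-1 \geq n-2$ by Theorem \ref{basicCDV}(ii) and we are done; so assume $G$ is chordal but not complete. Recall Dirac's theorem that any such graph contains at least two non-adjacent simplicial vertices; in particular $G$ has a simplicial vertex $v$, and $G-v$ is still chordal. By the inductive hypothesis $\mu(G-v)+\mu(\overline{G-v}) \geq n-3$, and since $\mu$ is minor-monotone we already have $\mu(G) \geq \mu(G-v)$ and $\mu(\overline{G}) \geq \mu(\overline{G}-v) = \mu(\overline{G-v})$. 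The task reduces to squeezing one extra ``$+1$'' out of $v$.

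The two easy cases are when $v$ is degree-extremal in $G$. If $d_G(v)=0$ then $v$ is adjacent to every other vertex in $\overline{G}$, so part (iii) of Theorem \ref{basicCDV} gives $\mu(\overline{G}) = \mu(\overline{G-v})+1$, and the required $+1$ is immediate. The case $d_G(v)=n-1$ is symmetric, the boost coming from $\mu(G) = \mu(G-v)+1$. In either case the inductive step closes.

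The main obstacle is the intermediate range $1 \leq d_G(v) \leq n-2$, where $v$ is neither universal in $G$ nor in $\overline{G}$, so Theorem \ref{basicCDV}(iii) is not available at $v$ alone. My plan here is to exploit the second simplicial vertex $w$ guaranteed by Dirac's theorem. Since $vw \notin E(G)$ we have $vw \in E(\overline{G})$, and the sets $N_G(v)$, $N_G(w)$, and $N_G(v) \cap N_G(w)$ are all cliques of $G$ and hence independent sets of $\overline{G}$. I would use this pair either to build a complete minor of $\overline{G}$ strictly larger than any complete minor of $\overline{G-v}$---for instance by contracting $vw$ and routing new branch sets through $V(G) \setminus (N_G[v] \cup N_G[w])$---or, if no clean combinatorial boost is available, to construct a matrix witness at the level of Colin de Verdi\`{e}re matrices: start from an optimal matrix $M$ for $\overline{G-v}$, append a row and column for $v$ whose sign pattern is forced by $N_{\overline{G}}(v) = V(G) \setminus N_G[v]$, and exploit the freedom in the new off-diagonal and diagonal entries (together with Cauchy interlacing to preserve the single negative eigenvalue) to enlarge the nullspace by one dimension. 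Verifying the Strong Arnold Property of the extended matrix is where I would expect most of the difficulty to concentrate; the presence of the second simplicial vertex $w$ should be what makes the necessary generic perturbation possible.
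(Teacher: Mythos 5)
This statement is quoted in the paper from Mitchell and Yengulalp \cite{chordalGraphs}; the paper supplies no proof of its own, so your attempt must stand on its own merits, and it does not: the inductive step is missing at exactly the crux. Your base cases and the two degree-extremal cases ($d_G(v)=0$ or $d_G(v)=n-1$) are fine, but they are the trivial part. In the remaining range you must show that for some vertex $v$ at least one of $\mu(G)=\mu(G-v)+1$ or $\mu(\overline{G})=\mu(\overline{G-v})+1$ holds, and neither of your two fallback strategies accomplishes this. The complete-minor route is structurally insufficient even if executed: producing $h(\overline{G})>h(\overline{G-v})$ only yields $\mu(\overline{G})\geq h(\overline{G})-1$, which does not give $\mu(\overline{G})\geq \mu(\overline{G-v})+1$ unless the inductive value $\mu(\overline{G-v})$ already equals $h(\overline{G-v})-1$, which there is no reason to assume. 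The matrix-extension route, as stated, would prove that appending a vertex always raises the corank by one, which is false (append a pendant vertex to $K_3$: the result is still outerplanar with a triangle, so $\mu$ stays at $2$); so some specific consequence of chordality must enter the construction, and you have not identified what it is or how it forces the Strong Arnold Property. Announcing that "the presence of the second simplicial vertex $w$ should be what makes the necessary generic perturbation possible" is a hope, not an argument.

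For comparison, the actual proof in \cite{chordalGraphs} does not proceed by this kind of vertex-by-vertex induction at all. It combines the Fallat--Mitchell computation $\mu(G)\geq \omega(G)-1$ for chordal $G$ (Theorem \ref{chordalChar} in this paper) with a lower bound of the form $\mu(\overline{G})\geq |V(G)|-\omega(G)-1$, the latter obtained by constructing sphere representations of $\overline{G}$ in the sense of Kotlov, Lov\'asz, and Vempala \cite{sphereRep}, exploiting the clique-tree (stacked-polytope) structure of chordal graphs. If you want to salvage an inductive scheme, you would at minimum need to induct on that geometric representation rather than on the Colin de Verdi\`ere matrices directly, since corank and the Strong Arnold Property do not behave well under one-vertex extensions.
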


For an integer $t \geq 3$, let $K_t-\Delta$ denote the graph obtained from $K_t$ by deleting the edges of a triangle. Fallat and Mitchell proved that:
\begin{theorem}
\cite{chordalChar}
\label{chordalChar}
Let $G$ be a chordal graph. Then $\mu(G) = \omega(G)$ if and only if $G$ has $K_{\omega(G)+2}-\Delta$ as an induced subgraph. Otherwise $\mu(G) = \omega(G)-1$.
\end{theorem}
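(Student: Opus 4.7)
The plan is first to establish the two-term range $\omega(G) - 1 \leq \mu(G) \leq \omega(G)$ for every chordal $G$, and then to characterize on which side equality falls. The lower bound is immediate from Theorem \ref{basicCDV} parts (i) and (ii): $K_{\omega(G)}$ is a subgraph, hence a minor, of $G$, so $\mu(G) \geq \mu(K_{\omega(G)}) = \omega(G) - 1$. For the upper bound I would induct on $|V(G)|$, using that every chordal graph has a simplicial vertex $v$. By definition $\{v\} \cup N(v)$ is a clique, so $d(v) \leq \omega(G) - 1$, and the induction hypothesis applied to the (still chordal) graph $G - v$ gives $\mu(G - v) \leq \omega(G - v) \leq \omega(G)$. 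Combined with Theorem \ref{basicCDV}(iii) this yields only $\mu(G) \leq \omega(G) + 1$, one too many; sharpening to $\omega(G)$ would require either invoking the general bound $\mu(G) \leq \mathrm{tw}(G) + 1$ together with the chordal identity $\mathrm{tw}(G) = \omega(G) - 1$, or a more careful matrix-level argument showing that deleting the row and column of a simplicial vertex from an optimal CdV matrix drops the corank by one in all but the extremal configurations.

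Once the sandwich $\mu(G) \in \{\omega(G) - 1,\, \omega(G)\}$ is in place, the characterization breaks into two directions. For the \emph{if} direction, assume $K_{t+2} - \Delta$ is an induced subgraph of $G$ with $t = \omega(G)$. I would first prove $\mu(K_{t+2} - \Delta) = t$ directly by writing down an explicit real symmetric matrix of corank $t$ that satisfies all three CdV axioms, checking the Strong Arnold Property by hand. The next task is to lift this lower bound from the induced subgraph to $G$ itself. Because $\mu$ is not monotone under induced subgraphs, this lifting must genuinely exploit chordal structure: the induced copy of $K_{t+2} - \Delta$ lies inside some bag of a clique tree of $G$, and each remaining vertex can be attached along a clique separator, letting one extend the explicit matrix to all of $V(G)$ in a corank-preserving way.

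For the \emph{only if} direction, I would prove the contrapositive: if $G$ is chordal with $\omega(G) = t$ and contains no induced $K_{t+2} - \Delta$, then $\mu(G) \leq t - 1$. Induct on $|V(G)|$ with a simplicial vertex $v$, apply the induction hypothesis to $G - v$, and use Theorem \ref{basicCDV}(iii) together with a case split on whether $\omega(G - v) = \omega(G)$ or $\omega(G) - 1$; the structural hypothesis (no induced $K_{t+2} - \Delta$) should rule out any jump to $\mu(G) = t$. The main obstacle I expect throughout is the \emph{if} direction. Constructing a corank-$t$ CdV matrix for $K_{t+2} - \Delta$ is an explicit but delicate linear-algebra exercise, and extending it across the clique tree of $G$ while preserving the Strong Arnold Property is where the bulk of the technical work should lie. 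Note that minor monotonicity alone is insufficient for this lower bound: one can check that $K_{t+2} - \Delta$ has no $K_{t+1}$ minor (any branch set of size $\geq 2$ would have to span a missing triangle edge), so $h(K_{t+2} - \Delta) = t$ and the inequality $\mu(G) \geq h(G) - 1$ gives only $\mu \geq t - 1$, not the desired $\mu \geq t$.
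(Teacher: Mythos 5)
This statement is quoted from Fallat and Mitchell \cite{chordalChar}; the present paper gives no proof of it, so there is no internal argument to compare yours against and what you are attempting is a from-scratch reproof of a cited result. Within your attempt there is one outright error: you assert that $\mu$ ``is not monotone under induced subgraphs'' and on that basis build a clique-tree ``lifting'' step to pass from $\mu(K_{t+2}-\Delta)=t$ to $\mu(G)\geq t$. But $\mu$ is minor-monotone (part (i) of Theorem \ref{basicCDV}), and every induced subgraph is a minor, so $\mu(G)\geq\mu(K_{t+2}-\Delta)$ is immediate and the entire lifting apparatus is unnecessary. Your closing remark that ``minor monotonicity alone is insufficient'' conflates minor-monotonicity with the weaker consequence $\mu(G)\geq h(G)-1$: applied to the subgraph $K_{t+2}-\Delta$ itself rather than to a complete minor, monotonicity is exactly sufficient. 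Moreover the value $\mu(K_{t+2}-\Delta)=t$ needs no explicit matrix or hand-checked Strong Arnold Property: $K_{t+2}-\Delta$ is the join of $K_{t-1}$ with $\overline{K_3}$, i.e.\ it is obtained from $K_{1,3}$ (which has $\mu=2$, being outerplanar but not a subgraph of a path) by repeatedly adding a dominating vertex, so part (iii) of Theorem \ref{basicCDV} gives $\mu(K_{t+2}-\Delta)=2+(t-2)=t$ directly.

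The two steps carrying the real content are both left open. For the upper bound $\mu(G)\leq\omega(G)$ you concede that the simplicial-vertex induction overshoots by one and propose to import $\mu(G)\leq\mathrm{tw}(G)+1$ together with $\mathrm{tw}(G)=\omega(G)-1$ for chordal graphs; the former is a genuine but nontrivial theorem of Colin de Verdi\`{e}re, so this route can be made rigorous, but it replaces one citation with another rather than supplying a proof. More seriously, for the ``only if'' direction your own outline exposes the gap: with $v$ simplicial and $\omega(G-v)=\omega(G)=t$, induction plus part (iii) of Theorem \ref{basicCDV} yields only $\mu(G)\leq\mu(G-v)+1\leq t$, and you offer no mechanism by which the absence of an induced $K_{t+2}-\Delta$ forces the loss of that extra $1$. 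That step is precisely where the substance of Fallat and Mitchell's argument lies (they track $\mu$ across the decomposition of a chordal graph into clique sums of complete graphs, using the known behaviour of $\mu$ under clique sums), and without it the proposal does not constitute a proof.
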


We are now ready to prove the final lemma of this section.

\begin{lemma}
If $G$ is a graph so that $\overline{G}$ is chordal, then $|E(G)| \leq \mu(G)|V(G)|-\binom{\mu(G)+1}{2}$.
\end{lemma}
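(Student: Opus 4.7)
The plan is to take a minimum-vertex counterexample $G$ with $\overline{G}$ chordal, write $n = |V(G)|$ and $t = \mu(G)$, and derive a contradiction by case analysis on $\alpha(G)$. By Lemma \ref{basicCE}, $\delta(G) \geq t + 1$. Applying Theorem \ref{chordalBound} to the chordal graph $\overline{G}$ gives $\mu(\overline{G}) \geq n - 2 - t$, and Theorem \ref{chordalChar} gives $\omega(\overline{G}) \geq \mu(\overline{G})$. Hence $\alpha(G) = \omega(\overline{G}) \geq n - 2 - t$, which leaves three cases.

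If $\alpha(G) \geq n - t$, a maximum independent set contributes at least $\binom{n-t}{2}$ non-edges, so $|E(G)| \leq \binom{n}{2} - \binom{n-t}{2} = tn - \binom{t+1}{2}$, a contradiction. If $\alpha(G) = n - t - 2$, then $\mu(\overline{G}) \leq \omega(\overline{G}) = n - t - 2$ together with $\mu(\overline{G}) \geq n - t - 2$ forces equality, so by Theorem \ref{chordalChar} the chordal graph $\overline{G}$ contains $K_{n - t} - \Delta$ as an induced subgraph; complementing, $G$ has an induced subgraph on $n - t$ vertices isomorphic to $K_3 \cup (n - t - 3) K_1$. For $n \geq t + 4$ the $n - t - 3$ isolated vertices there have all of their $G$-neighbors among the remaining $t$ vertices, so their degree is at most $t$, contradicting $\delta(G) \geq t + 1$; the subcase $n = t + 3$ would force $\alpha(G) = 1$ and hence $G = K_n$, incompatible with $\mu(G) = t < n-1$.

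The main case is $\alpha(G) = n - t - 1$. Fix a maximum independent set $I$; then $|V \setminus I| = t + 1$, and since each vertex of $I$ has all its neighbors in $V \setminus I$, the hypothesis $\delta(G) \geq t + 1$ forces every vertex of $I$ to be adjacent to every vertex of $V \setminus I$. Thus $G$ contains $K_{n - t - 1,\, t + 1}$ as a spanning bipartite subgraph. Writing $|E(G)| = |E(G[V \setminus I])| + (n - t - 1)(t + 1)$, the counterexample inequality rearranges to $|E(\overline{G}[V \setminus I])| \leq n - t - 2$, so $\overline{G}[V \setminus I]$ admits a vertex cover $C \subseteq V \setminus I$ with $|C| \leq n - t - 2 < |I|$, and $V \setminus I \setminus C$ is a clique of $G$ of size $t + 1 - |C|$. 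Match each $c \in C$ to a distinct vertex of $I$ (using the complete bipartite structure between $I$ and $V \setminus I$) and contract every matching edge. Each contracted vertex becomes universal in the minor, because its endpoint in $I$ is adjacent to all of $V \setminus I$ and its endpoint in $V \setminus I$ is adjacent to all of $I$. The $|C|$ universal vertices, the clique $V \setminus I \setminus C$, and one remaining vertex of $I$ (which is still adjacent to every vertex of $V \setminus I$) then span a $K_{t+2}$ subgraph of the contracted minor, so $G$ has a $K_{t+2}$ minor and $\mu(G) \geq \mu(K_{t+2}) = t + 1$, the desired contradiction.

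The hard step is the case $\alpha(G) = n - t - 1$; the key insight is that the counterexample inequality forces the few non-edges of $G[V \setminus I]$ to admit a small vertex cover $C$, after which the complete bipartite structure between $I$ and $V \setminus I$ lets us contract along $C$ to produce a $K_{t+2}$ minor.
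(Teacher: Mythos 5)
Your proof is correct, and while its skeleton matches the paper's (vertex-minimal counterexample, Lemma~\ref{basicCE} giving $\delta(G) > \mu(G)$ and $\Delta(G) < n-1$, and the two chordal theorems, Theorems~\ref{chordalBound} and~\ref{chordalChar}), the decisive step is genuinely different. The paper rules out $\mu(\overline{G}) = \omega(\overline{G})$ via a bound on $\Delta(\overline{G})$, extracts the same complete bipartite structure between a maximum independent set and its complement, and then finishes by contracting a \emph{single} edge to create a universal vertex, invoking Theorem~\ref{basicCDV}(iii) to bound $\mu(G-\{u,v\})$, and closing with a degree count against the inductive bound on $G - \{u,v\}$. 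You instead organize the argument by the value of $\alpha(G) = \omega(\overline{G})$ (the extreme cases $\alpha \geq n-t$ and $\alpha = n-t-2$ being dispatched by Lemma~\ref{complement}-type counting and by the $K_{\omega+2}-\Delta$ induced subgraph forcing a low-degree vertex), and in the main case $\alpha = n-t-1$ you convert the counterexample inequality into a small vertex cover of the non-edges inside $V \setminus I$ and contract a matching into $I$ to exhibit a $K_{t+2}$ minor, contradicting $\mu(G) = t$ by minor-monotonicity. I checked the arithmetic ($|E(\overline{G}[V\setminus I])| \leq n-t-2 < |I|$) and the adjacency verification for the $t+2$ branch sets, and the degenerate subcases ($n = t+2$, $|C| = t+1$, $V\setminus I\setminus C = \emptyset$) all go through. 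What your route buys is a cleaner endgame: the final contradiction needs only minor-monotonicity and $\mu(K_{t+2}) = t+1$ rather than a second application of the inductive hypothesis plus careful edge counting; what it costs is the extra case analysis on $\alpha(G)$, which the paper avoids by handling $\mu(\overline{G}) = \omega(\overline{G})$ once up front.
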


\begin{proof}
Let $G$ be a vertex-minimal counterexample, and set $n \coloneqq |V(G)|$. First we show two claims:

\begin{claim}
$\omega(\overline{G}) \geq 2$
\end{claim}
\begin{proof}
Otherwise $G$ is a complete graph and by (ii) of Theorem \ref{basicCDV}, $\mu(G) = n-1$. Then $|E(G)| = \binom{n}{2} = \mu(G)n-\binom{\mu(G)+1}{2}$, a contradiction. 
\end{proof}

\begin{claim}
$\Delta(\overline{G}) < \mu(\overline{G})+1$
\end{claim}
\begin{proof}
Otherwise by Theorem \ref{chordalBound}, $\Delta(\overline{G}) \geq \mu(\overline{G})+1 \geq n-1-\mu(G)$. Then $\delta(G) \leq \mu(G)$, a contradiction to Lemma \ref{basicCE} since $G$ is a vertex-minimal counterexample and every induced subgraph of $G$ has chordal complement.
\end{proof}

Now, suppose $\mu(\overline{G}) = \omega(\overline{G})$. Then by Theorem \ref{chordalChar}, $\overline{G}$ has an induced subgraph that is isomorphic to $K_{\omega(\overline{G})+2}-\Delta$. Since $\omega(\overline{G})\geq 2$, we have $\Delta(\overline{G}) \geq \Delta(K_{\omega(\overline{G})+2}-\Delta) = \omega(\overline{G})+1 = \mu(\overline{G})+1$, a contradiction to Claim 2.

So $\mu(\overline{G}) = \omega(G)-1$. Let $S \subseteq V(\overline{G})$ be the set of vertices of a maximum clique of $\overline{G}$. Write $\overline{S} \coloneqq V(\overline{G})-S$. First we will show that if $x \in S$ and $y \in \overline{S}$, then $xy \notin E(\overline{G})$. If $xy \in E(\overline{G})$, then $d_{\overline{G}}(x) \geq \omega(\overline{G}) = \mu(\overline{G})+1$, a contradiction to Claim 2.

If $\overline{S} = \emptyset$, then $E(G) = \emptyset$ and $G$ would satisfy the lemma. So $\overline{S} \neq \emptyset$. Then let $u \in S$ and $v \in \overline{S}$. We have $uv \in E(G)$. Let $uv$ also denote the new vertex of $G/uv$. Since in $\overline{G}$ the vertex $u$ is adjacent to no vertices in $\overline{S}$ and $v$ is adjacent to no vertices in $S$, the vertex $uv$ is adjacent to every other vertex in $G/uv$. Also, since $|S| \geq 2$, $G/uv$ contains an edge. So by (iii) of Theorem \ref{basicCDV}, $\mu(G/uv) = \mu(G-\{u,v\})+1$. Then $|E(G-\{u,v\})| \leq (\mu(G)-1)(n-2)-\binom{\mu(G)}{2}$.

Also, $d_{\overline{G}}(u) = \omega(\overline{G})-1$, so $d_G(u) = n-\omega(\overline{G}) = n-1-\mu(\overline{G}) \leq \mu(G)+1$ by Theorem \ref{chordalBound}. By Lemma \ref{basicCE}, $d_G(y) < n-1$. Then
\begin{multline*}
|E(G)| = |E(G-\{u,v\})|+d_G(u)+d_G(v)-1 \leq (\mu(G)-1)(n-2)-\binom{\mu(G)}{2}+\mu(G)+n-2\\
= \mu(G)n-\binom{\mu(G)+1}{2}
\end{multline*}
a contradiction.
\end{proof}

\section{Graphs with Small or Large Parameter}
In this section we will show that graphs $G$ such that either $\mu(G) \leq 7$ or $\mu(G) \geq |V(G)|-6$ have $|E(G)| \leq \mu(G)|V(G)|-\binom{\mu(G)+1}{2}$. First we give some definitions related to clique sums.

Let $k$ be a non-negative integer and let $G_1$ and $G_2$ be two vertex-disjoint graphs. For $i = 1,2$ let $C_i \subseteq V(G_i)$ be a clique of size $k$ of $G_i$. Then let $G$ denote the graph obtained from $G_1$ and $G_2$ by identifying the vertices in cliques $C_1$ and $C_2$ by some bijection. We say $G$ is a \textit{pure} $k$-clique sum of $G_1$ and $G_2$.

Let $H$ be some fixed graph and let $k$ be a non-negative integer. We say a graph $G$ is \textit{built} by pure $k$-sums of $H$ if either $G$ is isomorphic to $H$, or if $G$ is a pure $k$-clique sum of graphs $H_1$ and $H_2$, where $H_1$ and $H_2$ are built by pure $k$-sums of $H$. The following generalization of Theorem \ref{hadwigerEdgeBound} is due to J{\o}rgensen.

\begin{theorem}
\cite{JorgensenEdgeBound}
\label{K8}
Let $G$ be a graph with $h(G) \leq 7$, $|V(G)| \geq 6$, and $|E(G)| > 6|V(G)|-21$. Then $|E(G)| = 6|V(G)|-20$, and $G$ can be built by pure 5-sums of $K_{2,2,2,2,2}$.
\end{theorem}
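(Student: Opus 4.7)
The plan is to induct on $n \coloneqq |V(G)|$. Let $G$ be a vertex-minimum counterexample: $h(G) \leq 7$, $n \geq 6$, $|E(G)| > 6n - 21$, yet either $|E(G)| \neq 6n - 20$ or $G$ cannot be built by pure $5$-sums of $K_{2,2,2,2,2}$.

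The first reduction is to force $\delta(G) \geq 6$. If some vertex $v$ satisfies $d(v) \leq 5$, then $G - v$ still has $h(G - v) \leq 7$ and, once $n \geq 7$, at least $6$ vertices, so the inductive hypothesis gives $|E(G - v)| \leq 6(n-1) - 20$. Adding back the degree, $|E(G)| \leq 6n - 26 + 5 = 6n - 21$, a contradiction. So $n \geq 7$ and every vertex has degree at least $6$.

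Next I would eliminate clique cutsets of size at most $5$. If $S$ is such a cutset of size $k$, then $G$ is a pure $k$-clique-sum of two strictly smaller graphs $G_1$ and $G_2$, each of which is a minor of $G$, so $h(G_i) \leq 7$. Applying induction to the sides with at least $6$ vertices, using $|E(G_i)| \leq \binom{|V(G_i)|}{2}$ on any smaller side, and combining with $|V(G)| = |V(G_1)| + |V(G_2)| - k$ and $|E(G)| = |E(G_1)| + |E(G_2)| - \binom{k}{2}$, a routine case check yields
\[
|E(G)| \leq 6n - 20,
\]
with equality forcing $k = 5$, both sides to be extremal, and hence (by the inductive characterization) each side to be a pure $5$-sum of copies of $K_{2,2,2,2,2}$. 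In that equality case $G$ itself is a pure $5$-sum of copies of $K_{2,2,2,2,2}$, matching the conclusion; otherwise we contradict $|E(G)| > 6n - 21$. Hence we may assume $G$ has no clique cutset of size at most $5$.

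The remaining and most difficult case is when $\delta(G) \geq 6$, $G$ has no clique cutset of size at most $5$, and $|E(G)| \geq 6n - 20$, with the task of concluding $G \cong K_{2,2,2,2,2}$. The strategy I would pursue is, in the spirit of Mader's arguments for $K_6$- and $K_7$-minor-free graphs, to fix a vertex $v$ of degree $\delta(G)$, study the induced graph $G[N(v)]$, and attempt to contract a connected subgraph of $G - v$ that, together with $v$ and a clique-like substructure inside $G - v$, yields a $K_8$ minor; the high clique-connectivity and the edge surplus over $6n - 21$ should be what makes such a minor available unless the local neighborhood of $v$ has very restrictive structure. Characterizing those restrictive local configurations, which I expect to look like the bipartite-pair pattern of $K_{2,2,2,2,2}$, and then chasing them through $G$ globally to pin $G$ down uniquely as $K_{2,2,2,2,2}$ (thereby also forcing $n = 10$) is where essentially all of the difficulty lies.
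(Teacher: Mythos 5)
This statement is not proved in the paper at all: it is J{\o}rgensen's theorem on the extremal function for $K_8$ minors, quoted as a black box from the cited reference, so the only fair comparison is between your sketch and a complete proof of that theorem.

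Your two reductions are the standard opening moves and are essentially sound. (Minor points: the base case $n=6$ is vacuous since $\binom{6}{2}=6\cdot 6-21$, so no counterexample exists there; and in the clique-cutset step the small-side cases are actually killed by $\delta(G)\geq 6$ rather than by the crude bound $|E(G_i)|\leq\binom{|V(G_i)|}{2}$, since a side with at most $5$ vertices outside a cutset of size $k\leq 5$ would contain a vertex of degree at most $5$. One also has to verify that a pure $5$-sum of two extremal graphs is again extremal, i.e.\ that no edges need to be deleted, which is why the word ``pure'' appears in the statement.) The genuine gap is the final step. Showing that a graph with $\delta(G)\geq 6$, no clique cutset of size at most $5$, no $K_8$ minor, and more than $6n-21$ edges must be $K_{2,2,2,2,2}$ is the entire content of J{\o}rgensen's paper --- roughly twenty pages of delicate case analysis on the structure of $G[N(v)]$ for a minimum-degree vertex $v$, on how to route disjoint connected subgraphs to build a $K_8$ minor, and on the exceptional configurations that block it. Your last paragraph describes the intended strategy but supplies none of this argument, so as written the proposal is an outline of a proof, not a proof. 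Since the paper itself only cites the result, the appropriate fix is either to cite J{\o}rgensen as the paper does or to carry out that case analysis in full.
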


For graphs with no $K_9$ minor, Song and Thomas proved:

\begin{theorem}
\cite{SongThomas}
\label{K9}
Let $G$ be a graph with $h(G) \leq 8$, $|V(G)| \geq 7$, and $|E(G)| > 7|V(G)|-28$. Then $|E(G)| = 7|V(G)|-27$, and either either $G$ is isomorphic to $K_{2,2,2,3,3}$, or $G$ can be built by pure 6-sums of $K_{1,2,2,2,2,2}$.
\end{theorem}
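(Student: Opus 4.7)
The plan is to follow the template of Mader's extremal argument for smaller Hadwiger number and of J{\o}rgensen's proof of Theorem \ref{K8}. I would take a counterexample $G$ that minimizes $|V(G)|$ and, subject to that, maximizes $|E(G)|$. So $G$ has no $K_9$ minor, $n \coloneqq |V(G)| \geq 7$, and either $|E(G)| \geq 7n - 26$, or $|E(G)| = 7n - 27$ but $G$ is neither $K_{2,2,2,3,3}$ nor a pure $6$-sum of $K_{1,2,2,2,2,2}$.

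The first step is a minimum degree bound. If $v \in V(G)$ has $d(v) \leq 6$, then $G - v$ is still $K_9$-minor-free and has at least $7n - 27 - 6 = 7(n-1) - 26$ edges, making $G - v$ a smaller counterexample and contradicting minimality. So $\delta(G) \geq 7$, and a slightly more refined accounting (applying the induction hypothesis to $G - v$ for a degree-$7$ vertex $v$ and checking that re-attaching such a vertex cannot land us inside either extremal family) should push this to $\delta(G) \geq 8$ in the generic case.

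Next I would rule out small separators. If $S \subseteq V(G)$ separates $G$ and is a clique of size $k \leq 6$, then $G$ is a pure $k$-sum of strictly smaller $K_9$-minor-free graphs $G_1$ and $G_2$; by induction each satisfies $|E(G_i)| \leq 7|V(G_i)| - 27$, and a direct count shows that the pure sum also meets the bound, with the equality case forcing both pieces to be extremal (so $G$ itself is a pure $6$-sum of extremal blocks, which is consistent with the conclusion rather than a counterexample). For a non-clique separator $S$ of size at most $6$, one contracts a connected subgraph of one side of $G - S$ to a single vertex, adds the missing edges inside $S$, and applies induction to the resulting smaller graph, using minor-monotonicity to preserve $K_9$-minor-freeness. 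This should force $G$ to be essentially $7$-connected.

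The core of the argument is then a contraction analysis in a highly connected, high-minimum-degree graph. One selects an edge $uv$ whose contraction keeps $G/uv$ free of $K_9$ minors but pushes the edge count above the induction bound on $n - 1$ vertices, or uses a specific local configuration (for example, a vertex of degree $8$ whose neighborhood is close to a complete graph minus a small matching) to build a $K_9$ minor by routing disjoint connected subgraphs through the dense region. The main obstacle, and the reason the full proof is long and technical, is not the edge bound itself but the characterization: showing that the only extremal graphs are $K_{2,2,2,3,3}$ and pure $6$-sums of $K_{1,2,2,2,2,2}$ requires a detailed case analysis of the local structure around low-degree vertices and of how candidate extremal blocks can be glued along $6$-cliques without creating a $K_9$ minor. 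This structural rigidity step is where most of the work lies, in contrast to Mader's original $h(G) \leq 6$ bound, which is comparatively short.
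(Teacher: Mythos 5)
This statement is not proved in the paper at all: it is quoted verbatim from Song and Thomas \cite{SongThomas} and used as a black box in the proof of Lemma \ref{smallMu}. So there is no internal proof to compare against, and your proposal has to be judged as a self-contained argument. As such it has a genuine gap: it is a roadmap, not a proof. The opening reductions are fine as far as they go --- the minimum-degree bound ($d(v)\leq 6$ gives $|E(G-v)|\geq 7(n-1)-26$, contradicting induction) and the pure clique-sum arithmetic ($7k-\binom{k}{2}\leq 27$ for $k\leq 6$, with equality exactly at $k=6$) are correct and standard. But everything after that is a description of what a proof would have to do rather than a proof. ``Essentially $7$-connected'' is asserted, not established; the non-clique separator case in particular requires showing that re-adding the edges inside $S$ after contracting one side does not create a $K_9$ minor and does not lose too many edges, which is delicate. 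Most importantly, the entire core of the theorem --- that a sufficiently connected graph with $\delta\geq 8$ and more than $7n-27$ edges contains a $K_9$ minor, and that the only graphs achieving $7n-27$ are $K_{2,2,2,3,3}$ and pure $6$-sums of $K_{1,2,2,2,2,2}$ --- is compressed into a single sentence (``one selects an edge $uv$\dots or uses a specific local configuration\dots to build a $K_9$ minor''). You acknowledge yourself that ``this is where most of the work lies''; that work is the theorem, and it is not done here. The Song--Thomas argument at this stage depends on substantial specific machinery (detailed analysis of the neighborhood structure of minimum-degree vertices, and results feeding off J{\o}rgensen's $K_8$ theorem) that cannot be waved into existence by analogy with the $h(G)\leq 6$ case.

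If your intent was only to indicate why the citation is plausible, that is reasonable, but for the purposes of this paper the correct move is simply to cite \cite{SongThomas}, exactly as the author does; an honest sketch should be clearly labeled as such rather than presented as a proof with the hard kernel deferred.
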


We will also make use of the following theorem due to Kotlov, Lov\'{a}sz, and Vempala.

\begin{theorem}
\cite{sphereRep}
\label{largeMu}
If $G$ is a graph with $\mu(G) \leq 3$, then $\mu(G) + \mu(\overline{G})\geq |V(G)|-2$. 
\end{theorem}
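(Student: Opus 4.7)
The strategy is to represent $G$ geometrically in a way that allows one to read off a witness matrix for $\mu(\overline{G})$ of the required corank. Uniformly, for $\mu(G) \le d$ one aims to produce a $d$-dimensional ``sphere-like'' representation of $G$ and deduce $\mu(\overline{G}) \ge n - d - 2$. The four cases $d \in \{0, 1, 2, 3\}$ correspond by Colin de Verdi\`{e}re's characterization to $G$ being edgeless, a subgraph of a path, outerplanar, or planar, and in each case the relevant representation is classical: for $d=0$ nothing is needed, since $\overline{G} = K_n$ gives $\mu(\overline{G}) = n-1$ by (ii) of Theorem \ref{basicCDV}; for $d = 1$ an interval representation on a line; for $d = 2$ an arc representation on a circle (equivalently, a disk packing in the plane); and for $d = 3$ the Koebe--Andreev--Thurston disk packing on $S^2$.

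The heart of the argument is $d = \mu(G) = 3$. I would first reduce to a planar triangulation $G^{\star} \supseteq G$ on the same vertex set, noting that $\overline{G^{\star}} \subseteq \overline{G}$ is a subgraph, hence a minor, of $\overline{G}$, giving $\mu(\overline{G^{\star}}) \le \mu(\overline{G})$, so a lower bound for $\mu(\overline{G^{\star}})$ transfers to $\overline{G}$. Then invoke Koebe--Andreev--Thurston: $G^{\star}$ is realised as the tangency graph of a family of closed disks $D_1, \ldots, D_n$ on $S^2 \subset \mathbb{R}^3$, with $D_i, D_j$ tangent iff $ij \in E(G^{\star})$ and disjoint iff $ij \in E(\overline{G^{\star}})$.

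From the packing, construct the witness matrix $M$: lift each $D_i$ to a vector $v_i \in \mathbb{R}^{3,1}$ (Minkowski space) via the standard inversive-geometry embedding, and form the ``inversive-distance'' matrix $M$ from the pairwise Lorentz inner products, normalized so that $M_{ij} = 0$ iff the disks are tangent and $M_{ij} < 0$ iff they are disjoint. Then $M$ exhibits the zero/negative pattern demanded by condition (1) in the definition of $\mu(\overline{G^{\star}})$; it has exactly one negative eigenvalue inherited from the Lorentzian signature $(3,1)$, matching condition (2); and its rank is at most $5$ (four from the Minkowski dimension, plus one absorbed from the diagonal normalization), so the corank is at least $n - 5$.

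The final step, and the main obstacle, is verifying the Strong Arnold Hypothesis (condition (3)) for $M$: any symmetric $X$ with $MX = 0$ and support on $E(G^{\star})$ must vanish. Through the inversive-geometry dictionary this amounts to an infinitesimal rigidity statement about the packing: any perturbation of the $D_i$ preserving every tangency must arise from the $6$-dimensional M\"{o}bius group of $S^2$. This is precisely the rigidity content of Koebe--Andreev--Thurston, and the dimension count matches exactly ($3n$ disk parameters minus $3n - 6$ tangency constraints equals $6$). Once this is in hand, $\mu(\overline{G^{\star}}) \ge n - 5$, hence $\mu(\overline{G}) \ge n - 5$, so $\mu(G) + \mu(\overline{G}) \ge 3 + (n - 5) = n - 2$. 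The cases $d = 1, 2$ are treated along the same template using a $1$- or $2$-dimensional sphere representation; the smaller ambient dimension yields a correspondingly larger corank, so the analogous bound $\mu(\overline{G}) \ge n - d - 2$ holds and again sums with $\mu(G)$ to at least $n - 2$.
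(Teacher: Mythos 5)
First, a point of orientation: the paper does not prove this statement. Theorem \ref{largeMu} is quoted from Kotlov, Lov\'{a}sz, and Vempala \cite{sphereRep}, so there is no internal proof to compare against; your sketch is a reconstruction of their argument. The overall architecture is right: reduce to a triangulation $G^{\star}$, take the Koebe--Andreev--Thurston packing, form a matrix for $\overline{G^{\star}}$ from the Lorentz inner products of the corresponding caps, and read off corank at least $n-5$; the cases $\mu(G)\le 2$ then follow either from lower-dimensional representations as you propose, or more cheaply by joining one or two universal vertices to $G$ to land back in the planar case, since the resulting isolated vertices in the complement do not decrease $\mu(\overline{G})$.

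The genuine gap sits exactly where you flag ``the main obstacle'': condition (3), the Strong Arnold Property, is never actually verified. You assert that it ``is precisely the rigidity content of Koebe--Andreev--Thurston'' and support this with the parameter count $3n-(3n-6)=6$. But the Koebe theorem gives \emph{global} uniqueness of the packing up to M\"{o}bius transformations, and global uniqueness does not by itself yield the \emph{infinitesimal} nondegeneracy that the Strong Arnold Property encodes (a map can be injective while its derivative is singular), nor does equality in a dimension count certify that the relevant Jacobian has full rank. A symmetric $X$ supported on $E(G^{\star})$ with $MX=0$ corresponds to a self-stress of the tangency constraints, and ruling these out is the technical heart of \cite{sphereRep}; it requires its own argument (reduction to $3$-connected triangulations and a careful nondegeneracy analysis of the representation), not a one-line appeal to rigidity. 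A second, smaller gap: ``exactly one negative eigenvalue inherited from the Lorentzian signature'' needs justification, since the Gram matrix of vectors in $\mathbb{R}^{3,1}$ has \emph{at most} one negative eigenvalue, and the rank-one correction that zeroes out the tangent pairs can shift the signature; one must argue that the caps cannot all lie in a positive-semidefinite subspace and that the correction does not create a second negative eigenvalue. So the proposal identifies the right strategy but is not yet a proof.
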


Kotlov, Lov\'{a}sz, and Vempala also characterized exactly which graphs $G$ have $\mu(G) \geq |V(G)|-3$ (Theorems 3.3 and 5.2, \cite{sphereRep}). Let $P_{3,2}$ denote the graph formed from three disjoint paths of length two by identifying one end from each path. That is, $P_{3,2}$ is the graph in Figure \ref{P}. We will make use of the following corollary of these theorems:

\begin{figure}
\centering
\tikzstyle{every node}=[circle, draw, fill=black!50,
                        inner sep=0pt, minimum width=6pt]
\begin{tikzpicture}{
\node (x0) at (2,2) {};
\node (1) at (1.2,1) {};
\node (2) at (2,1) {};
\node (3) at (2.8,1) {};
\node (x1) at (0.6,0) {};
\node (x2) at (2,0) {};
\node (x3) at (3.4,0) {};
\foreach \i in {1,2,3}{
\draw (x0) -- (\i);
\draw (\i) -- (x\i);
}
};
\end{tikzpicture}
\caption{The graph $P_{3,2}$}
\label{P}
\end{figure}
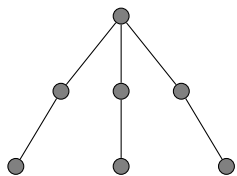

\begin{corollary}
\label{noSubgraph}
\cite{sphereRep}
If $G$ is a graph such that $\overline{G}$ contains no $P_{3,2}$ subgraph and no cycle, then $\mu(G) \geq |V(G)|-3$.
\end{corollary}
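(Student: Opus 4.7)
The plan is to invoke directly the characterization of graphs $G$ satisfying $\mu(G) \geq |V(G)|-3$ given by Kotlov, Lov\'{a}sz, and Vempala in Theorems 3.3 and 5.2 of \cite{sphereRep}, which describe such graphs precisely in terms of the structure of $\overline{G}$. The corollary then reduces to checking that the hypotheses here place $\overline{G}$ into that class.

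The first step is a structural observation: under the hypotheses, $\overline{G}$ is a disjoint union of caterpillars. Since $\overline{G}$ contains no cycle, it is a forest. The absence of a $P_{3,2}$ subgraph means no vertex of $\overline{G}$ has three distinct neighbors each of degree at least $2$ (this is exactly the condition for a spider with three legs of length $2$ to appear). Equivalently, in each tree component $T$ of $\overline{G}$, the subgraph induced by the non-leaf vertices of $T$ has maximum degree at most $2$, so it is a path, possibly empty or a single vertex. Hence $T$ is a caterpillar, and $\overline{G}$ is a disjoint union of caterpillars, some of which may be isolated vertices.

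Once that structural description of $\overline{G}$ is in hand, the remainder is to match $\overline{G}$ against the configurations listed in the KLV characterization. I would organize this inductively to cut the case analysis down. If $\overline{G}$ has an isolated vertex $v$, then $v$ is universal in $G$; assuming $E(G)\neq\emptyset$ (else the bound is trivial), part (iii) of Theorem \ref{basicCDV} gives $\mu(G) = \mu(G-v)+1$, and since $\overline{G-v}$ is again a disjoint union of caterpillars with no $P_{3,2}$ and no cycle, induction on $|V(G)|$ yields $\mu(G-v)\geq |V(G)|-4$ and hence the claim. The base case of the induction is when every component of $\overline{G}$ is a nontrivial caterpillar, which is precisely one of the configurations the KLV characterization identifies as yielding $\mu(G)\geq|V(G)|-3$.

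The main obstacle I expect is simply being careful about the exact content of the KLV characterization, which includes some small explicit exceptions, and matching the allowed caterpillar configurations against that list. Once this matching is verified, the bound $\mu(G)\geq|V(G)|-3$ follows immediately.
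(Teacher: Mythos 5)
Your proposal matches the paper's treatment: the paper gives no proof of this corollary beyond citing the Kotlov--Lov\'asz--Vempala characterization of graphs with $\mu(G)\geq |V(G)|-3$ (their Theorems 3.3 and 5.2), and your argument likewise reduces to that characterization. Your additional preprocessing --- observing that a $P_{3,2}$-free forest is a disjoint union of caterpillars and stripping universal vertices of $G$ via Theorem \ref{basicCDV}(iii) --- is correct and simply makes the final matching against the cited theorems easier.
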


Now we are ready to prove the following lemma.

\begin{lemma}
\label{smallMu}
Let $G$ be a graph with $\mu(G) \leq 7$. Then $|E(G)| \leq \mu(G)|V(G)|-\binom{\mu(G)+1}{2}$.
\end{lemma}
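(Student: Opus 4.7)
The plan is to let $t \coloneqq \mu(G)$ and split into cases according to $t$, using throughout the bound $h(G) \leq \mu(G)+1 = t+1$ noted in the introduction so that known extremal results for graphs with small Hadwiger number can be applied. For $t \leq 5$, Mader's Theorem \ref{hadwigerEdgeBound} applies directly: since $|V(G)| \geq \mu(G)+1 > t$ for any nonempty graph, the conclusion is precisely the target $|E(G)| \leq t|V(G)|-\binom{t+1}{2}$.

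For $t = 6$, I would invoke J{\o}rgensen's Theorem \ref{K8}: either $|E(G)| \leq 6|V(G)|-21$, which is what we want, or $|E(G)| = 6|V(G)|-20$ and $G$ is built by pure $5$-sums of $K_{2,2,2,2,2}$. In the latter case $K_{2,2,2,2,2}$ appears as an induced subgraph of $G$, and hence as a minor. Its complement is $5K_2$, whose components are all $K_2$, so $\mu(\overline{K_{2,2,2,2,2}}) = 1 \leq 3$; Theorem \ref{largeMu} then gives $\mu(K_{2,2,2,2,2}) \geq 10-2-1 = 7$. By minor-monotonicity of $\mu$, we get $\mu(G) \geq 7$, contradicting $\mu(G) = 6$.

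For $t = 7$, I would invoke Song--Thomas's Theorem \ref{K9}: either $|E(G)| \leq 7|V(G)|-28$, which is what we want, or $|E(G)| = 7|V(G)|-27$ and $G$ is isomorphic to $K_{2,2,2,3,3}$ or built by pure $6$-sums of $K_{1,2,2,2,2,2}$. The complements $\overline{K_{2,2,2,3,3}} = 3K_2 \cup 2K_3$ and $\overline{K_{1,2,2,2,2,2}} = K_1 \cup 5K_2$ have $\mu$ equal to $2$ and $1$ respectively, both at most $3$. So Theorem \ref{largeMu} gives $\mu(K_{2,2,2,3,3}) \geq 12-2-2 = 8$ and $\mu(K_{1,2,2,2,2,2}) \geq 11-2-1 = 8$; minor-monotonicity then forces $\mu(G) \geq 8$ in either extremal case, contradicting $\mu(G) = 7$.

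The hard part is these last steps in the $t = 6$ and $t = 7$ cases: the bounds of J{\o}rgensen and Song--Thomas exceed the target by exactly one edge, so I must rule out the characterized extremal graphs. This amounts to showing that each extremal graph has $\mu$ strictly larger than the $h(G)-1$ lower bound supplied by its Hadwiger number. The graph complement conjecture, proved for $\mu(\overline{G}) \leq 3$ as Theorem \ref{largeMu}, supplies exactly this boost, and it applies precisely because in every extremal case the complement is a disjoint union of $K_1$'s, $K_2$'s, and $K_3$'s, all with very small $\mu$.
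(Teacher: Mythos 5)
Your proposal is correct and follows essentially the same route as the paper: reduce $t\leq 5$ to Mader via $h(G)\leq\mu(G)+1$, then use J{\o}rgensen and Song--Thomas for $t=6,7$ and rule out the one-edge-over extremal graphs by computing $\mu(K_{2,2,2,2,2})\geq 7$, $\mu(K_{1,2,2,2,2,2})\geq 8$, $\mu(K_{2,2,2,3,3})\geq 8$ from Theorem \ref{largeMu} applied to their complements. The paper states this more tersely (simply noting $G$ cannot contain these graphs as subgraphs), but the logic and all numerical checks match yours.
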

\begin{proof}
First note that $\mu(K_{2,2,2,2,2}) \geq 7$, $\mu(K_{1,2,2,2,2,2}) \geq 8$, and $\mu(K_{2,2,2,3,3}) \geq 8$ by Theorem \ref{largeMu}, since $\mu(\overline{K_{2,2,2,2,2}}) =1$, $\mu(\overline{K_{1,2,2,2,2,2}}) =1$, and $\mu(\overline{K_{2,2,2,3,3}})=2$.

Let $G$ be a graph with $\mu(G) \leq 7$, and write $n \coloneqq |V(G)|$. If $\mu(G) \leq 5$, then since $h(G) \leq \mu(G)+1$, the lemma follows from Theorem \ref{hadwigerEdgeBound}. If $\mu(G) = 6$, then $G$ does not contain $K_{2,2,2,2,2}$ as a subgraph. So we are done by Theorem \ref{K8}. If $\mu(G)=7$, then $G$ does not contain $K_{1,2,2,2,2,2}$ or $K_{2,2,2,3,3}$ as a subgraph, and we are done by Theorem \ref{K9}. 
\end{proof}

For the next lemma we need to give some definitions related to subdivisions. Fix a graph $H'$. We say a graph $H$ is a \textit{subdivision} of $H'$ if $H$ can be formed from $H'$ by replacing edges of $H'$ with internally-disjoint paths with the same ends. Then we say $v \in V(H)$ is a \textit{branch} vertex of $H$ if also $v \in V(H')$. Suppose $H'$ is a bipartite graph with bipartition $(A,B)$. That is, $(A,B)$ is a partition of the vertex set of $H'$ such that every edge of $H'$ has one end in $A$ and one end in $B$. Then if $H$ is a subdivision of $H'$, we will say that branch vertices $u$ and $v$ of $H$ are \textit{in the same part} of $H$ if either $u,v \in A$ or $u,v \in B$. Now we are ready to prove the final lemma:

\begin{lemma}
Let $G$ be an $n$-vertex graph with $\mu(G) \geq n-6$. Then $|E(G)| \leq \mu(G)n-\binom{\mu(G)+1}{2}$.
\end{lemma}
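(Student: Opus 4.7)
The plan is to argue by contradiction using the complement formulation. By Lemma \ref{complement}, the desired inequality is equivalent to $|E(\overline{G})| \ge \binom{k}{2}$, where $k \coloneqq n - \mu(G) \in \{1, \dots, 6\}$. I would let $G$ be a vertex-minimal counterexample; Lemma \ref{basicCE} then yields $\Delta(\overline{G}) \le k - 2$ and $\delta(\overline{G}) \ge 1$, so every vertex of $\overline{G}$ has small degree but is incident to at least one edge. Lemma \ref{smallMu} reduces the interesting range to $\mu(G) \ge 8$, which forces $n$ to be comfortably large (at least $\mu(G)+1$).

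Two ingredients give a baseline edge count. First, if $\mu(\overline{G}) \le 3$ then Theorem \ref{largeMu} gives $\mu(\overline{G}) \ge n - 2 - \mu(G) = k - 2$, so in every case $\mu(\overline{G}) \ge k - 2$ or else $\mu(\overline{G}) \ge 4$. Second, Theorem \ref{pendavingh}, applied to a $\mu$-maximizing connected component $H$ of $\overline{G}$, upgrades this to $|E(H)| \ge \binom{\mu(\overline{G})+1}{2}$; the $K_{3,3}$ exception is absorbed because $\delta(\overline{G}) \ge 1$ forces the remaining vertices of $\overline{G}$ to sit in components contributing further edges. I would then split by $k$: the cases $k \le 2$ are immediate (for $k=2$, $G$ cannot be complete since $\mu(G)=n-2$); $k = 3$ reduces to $\overline{G}$ being a perfect matching, yielding $n/2 \ge 5$ edges; $k = 4$ makes $\overline{G}$ a disjoint union of paths and cycles with a cycle component mandated by $\mu(\overline{G}) \ge 2$, and an edge count against component count closes the bound; and for $k = 5, 6$ the Pendavingh bound on $H$ combined with the edges contributed by $\overline{G} \setminus H$ handles everything except when $H$ occupies nearly all of $V(\overline{G})$.

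The main obstacle is closing this residual gap, most severely when $k = 6$: there Theorem \ref{largeMu} only gives $\mu(\overline{G}) \ge 4$ and Pendavingh only $10$ edges, so five more must be extracted structurally. Here I would invoke the subdivision machinery introduced just before the lemma statement---looking for a subdivision of a suitable bipartite graph (for instance $K_{3,3}$) inside $\overline{G}$ whose branch vertices are distributed among the two parts in a prescribed way, and using its existence either to extract the missing edges of $\overline{G}$ directly or to build a minor of $\overline{G}$ with $\mu \ge 5$, which Pendavingh upgrades to $|E(\overline{G})| \ge 15$. Identifying the appropriate bipartite target and ruling out obstructions coming from the max-degree-$4$ constraint on $\overline{G}$ is, I expect, the bulk of the work; the smaller-$k$ cases are essentially bookkeeping.
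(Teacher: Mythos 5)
Your setup coincides with the paper's: pass to the complement via Lemma \ref{complement}, take a vertex-minimal counterexample, extract $\delta(\overline{G}) \geq 1$ (and $\Delta(\overline{G}) \leq k-2$) from Lemma \ref{basicCE}, reduce to $\mu(G) \geq 8$ via Lemma \ref{smallMu}, and use Theorem \ref{largeMu} to force $\mu(\overline{G}) \geq k-2$. The cases $k \leq 4$ do follow from counting (the paper dispatches them all at once from $8+k \leq n \leq 2\bigl(\binom{k}{2}-1\bigr)$, which forces $k \geq 5$). But the entire substance of the proof is the ``residual gap'' you defer, and neither of your two proposed routes for closing it can work. For $k=6$, the only case surviving Pendavingh is $\mu(\overline{G})=4$ exactly (if $\mu(\overline{G}) \geq 5$, Theorem \ref{pendavingh} already gives $15 > \binom{6}{2}-1$ edges), and then by minor-monotonicity no minor of $\overline{G}$ has $\mu \geq 5$, so that branch is a dead end. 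The other branch --- extracting the missing edges directly from a $K_{3,3}$ subdivision --- also falls short: the degree-sum count over the six branch vertices together with $\delta(\overline{G}) \geq 1$ gives only $|E(\overline{G})| \geq \tfrac{1}{2}(18 + 14 - 6) = 13$, not the needed $15$. (That style of count does suffice for a $K_5$ subdivision, and the analogous count with $K_4$/$K_{2,3}$ subdivisions is exactly how the paper finishes $k=5$; note that your Pendavingh-plus-leftover-components bound also leaves $k=5$ open when the extremal component is large, since $\mu(\overline{G})=3$ only buys six edges there.)

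What the paper actually does in the $K_{3,3}$ case is not an edge count at all. It chooses two branch vertices $u,v$ in the same part of the subdivision maximizing $d_{\overline{G}}(u)+d_{\overline{G}}(v)$, uses the edge budget $|E(\overline{G})| \leq \binom{6}{2}-1$ to show that $\overline{G-\{u,v\}}$ contains no cycle and no $P_{3,2}$ subgraph, and then invokes Corollary \ref{noSubgraph} (the Kotlov--Lov\'asz--Vempala characterization of graphs with $\mu \geq |V|-3$) to conclude $\mu(G-\{u,v\}) \geq n-5 > n-6 = \mu(G)$, contradicting monotonicity under vertex deletion. This structural step --- trading the unreachable edge bound for a lower bound on $\mu$ of an induced subgraph --- is the missing idea, and without it your argument does not close.
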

\begin{proof}
Let $G$ be a vertex-minimal counterexample. Write $n \coloneqq |V(G)|$ and $c \coloneqq n - \mu(G)$. First we will show that $\delta(\overline{G})\geq 1$. Let $v\in V(G)$. Then by part (iii) of Theorem \ref{basicCDV}, $\mu(G-v) \geq \mu(G)-1 \geq |V(G-v)|-6$. So by Lemma \ref{basicCE}, $\Delta(G) < n-1$. So $\delta(\overline{G})\geq 1$.

Next we find upper and lower bounds for $n$. By Lemma \ref{smallMu}, we may assume $\mu(G)\geq 8$, so $n = \mu(G)+c \geq 8+c$. By Lemma \ref{complement}, $|E(\overline{G})| < \binom{n-\mu(G)}{2} = \binom{c}{2}$. Then since $\delta(\overline{G})\geq 1$, we have $n \leq 2|E(\overline{G})|\leq 2(\binom{c}{2}-1)$. In total, we have $8+c \leq n \leq 2(\binom{c}{2}-1)$. This implies that $c \geq 5$. 

Now we will show that $\mu(\overline{G}) \geq c-2$. Otherwise, $\mu(\overline{G}) \leq c-3\leq 3$. Then by Theorem \ref{largeMu}, $n-2 \leq \mu(G)+\mu(\overline{G}) \leq n-3$, a contradiction. Now we proceed by cases.

\begin{flushleft}
\textit{Case 1:} $c = 5$.
\end{flushleft}
Then since $\mu(\overline{G}) \geq c-2 =3$, $\overline{G}$ is not outerplanar. So $\overline{G}$ has a subgraph $H$ that is either a subdivision of $K_4$ or a subdivision of $K_{2,3}$. Let $D \subseteq V(H)$ be the set of branch vertices of $H$. Then since $\delta(\overline{G}) \geq 1$ and $n \geq 8+c=13$,
$$\binom{5}{2} > |E(\overline{G})| \geq \frac{1}{2}\left(\sum_{x \in D}d_H(x)+\sum_{y \in V(G)-D}d_{\overline{G}}(y)\right) \geq \frac{1}{2}\left(\sum_{x \in D}d_H(x)+13-|D|\right)$$
In either case we get a contradiction.

\begin{flushleft}
\textit{Case 2:} $c = 6$.
\end{flushleft}
Then $\mu(\overline{G})\geq 4$ and so $\overline{G}$ is not planar. So $\overline{G}$ has a subgraph $H$ that is either a subdivision of $K_5$ or a subdivision of $K_{3,3}$. If $H$ is a subdivision of $K_5$ then similarly to before, since $\delta(\overline{G}) \geq 1$ and $n \geq 8+c=14$, we have $\binom{6}{2}-1 \geq |E(\overline{G})| \geq \frac{1}{2}(5*4+9) = \frac{29}{2}$, a contradiction.

So $H$ is a subdivision of $K_{3,3}$. Let $u,v \in V(H)$ be distinct branch vertices of $H$ that are in the same part of $H$ such that $d_{\overline{G}}(u)+d_{\overline{G}}(v)$ is maximum. We will show that $\overline{G-\{u,v\}}$ contains no $P_{3,2}$ subgraph and no cycle. Write $k \coloneqq d_{\overline{G}}(u)+d_{\overline{G}}(v)-6$. Then $k \geq 0$. Since $u$ and $v$ are not adjacent in $H$ and vertices adjacent to $u$ or $v$ in $H$ have degree at least 1 in $\overline{G-\{u,v\}}$, the graph $\overline{G-\{u,v\}}$ has at most $k$ vertices of degree 0. 

Suppose $\overline{G-\{u,v\}}$ has a $P_{3,2}$ subgraph. If $u$ and $v$ are not adjacent in $\overline{G-\{u,v\}}$, then
$$\binom{6}{2}-1 \geq |E(\overline{G})| = k+6+|E(\overline{G-\{u,v}\})|\geq k+6+|E(P_{3,2})|+\frac{1}{2}(12-|V(P_{3,2})|-k)\geq \frac{29}{2}$$
a contradiction. If $u$ and $v$ are adjacent in $\overline{G-\{u,v\}}$, then since they are not adjacent in $H$, we have $k \geq 2$. So similarly we have
$$\binom{6}{2}-1 \geq |E(\overline{G})| = k+5+|E(\overline{G-\{u,v}\})|\geq k+5+|E(P_{3,2})|+\frac{1}{2}(12-|V(P_{3,2})|-k)\geq \frac{29}{2}$$
again a contradiction. So $G$ contains no $P_{3,2}$ subgraph. 

Now we will show that $\overline{G-\{u,v\}}$ has no cycle. Write $S \coloneqq V(G) - V(H)$. Let $S_1$ be the set of vertices in $S$ with degree strictly greater than 1 in $\overline{G}$. Write $d \coloneqq \sum_{z \in V(H)}d_{\overline{G}}(z)-d_H(z)$. Then since $\delta(\overline{G}) \geq 1$ and $n \geq 14$, we have:
\begin{multline*}
\binom{6}{2}-1 \geq |E(\overline{G})| = \frac{1}{2}\left(\sum_{x \in V(H)}d_H(x)+d+\sum_{y \in S}d_{\overline{G}}(y)  \right) \\
\geq \frac{1}{2}\left(\sum_{x \in V(H)}d_H(x)+d+14-|V(H)|+|S_1|  \right) = \frac{1}{2}\left( |V(H)|+d+|S_1|+20 \right)
\end{multline*}
So $|V(H)|+d+|S_1|\leq 8$. Since $|V(H)| \geq 6$, we have that $d+|S_1| \leq 2$.

Suppose $\overline{G-\{u,v\}}$ contains a cycle $C$. If $|V(C)\cap S| \geq 3$, then $|S_1| \geq |V(C)\cap S| \geq 3$, a contradiction. If $|V(C)\cap S| \in  \{1,2\}$, then $\overline{G-\{u,v\}}$ has at least two edges with one end in $V(H)$ and the other in $S$. Then $d \geq 2$, and $|S_1| \geq |V(C)\cap S|\geq 1$, a contradiction. 

Finally, suppose $|V(C) \cap S| = 0$. Since $u$ and $v$ are in the same part of $H$, the graph $H-\{u,v\}$ contains no cycle. So there exist distinct vertices $a,b \in V(H)-\{u,v\}$ so that $a$ and $b$ are adjacent in $\overline{G - \{u,v\}}$ but not in $H$. Then we have $d_{\overline{G}}(a)-d_H(a),d_{\overline{G}}(b)-d_H(b)>0$, so $d \geq 2$. Then since $|V(H)|+d\leq 8$, we have $|V(H)|=6$ and $H$ is isomorphic to $K_{3,3}$. Then since $a$ and $b$ are not adjacent in $H$, they are in the same part of $H$. So by the choice of $u$ and $v$, we have $d_{\overline{G}}(u)+d_{\overline{G}}(v) \geq d_{\overline{G}}(a)+d_{\overline{G}}(b)\geq 8$. Then $d \geq d_{\overline{G}}(u)+d_{\overline{G}}(v) -6+d_{\overline{G}}(a)+d_{\overline{G}}(b)-6 \geq 4$, a contradiction.

We have shown that $\overline{G-\{u,v\}}$ has no cycle and no $P_{3,2}$ subgraph. Then by Corollary \ref{noSubgraph}, we have $n-6=\mu(G) \geq \mu(G-\{u,v\}) \geq |V(G-\{u,v\})|-3 = n-5$, a contradiction. This completes the proof.
\end{proof}

\section*{Acknowledgments}
I would like to thank Robin Thomas for his careful proof-reading of this manuscript and for helpful suggestions that lead to Observations 1 and 2.

\let\OLDthebibliography\thebibliography
\renewcommand\thebibliography[1]{
  \OLDthebibliography{#1}
  \setlength{\parskip}{0em}
  \setlength{\itemsep}{0pt plus 0.3ex}
}

\vspace*{\fill}
This material is based upon work supported by the National Science Foundation. Any opinions, findings, and conclusions or recommendations expressed in this material are those of the author and do not necessarily reflect the views of the National Science Foundation.

\begin{thebibliography}{9}

\bibitem{Appel} K. Appel and W. Haken, Every planar map is four colorable, \textit{Contemp. Math.} \textbf{98}
Amer. Math. Soc., Providence RI, (1989).

\bibitem{BollobasEtAl} B. Bollob\'{a}s, A. Catlin, and P. Erd\"{o}s, Hadwiger's conjecture is true for almost
every graph, \textit{European J. Combin.} \textbf{1} (1980) 195--199.

\bibitem{cdvIntro} 
Y. Colin de Verdi\`{e}re, Sur un nouvel invariant des graphes et un crit\`{e}re de planarit\'{e}, \textit{J. Combin. Theory Ser. B} \textbf{50} (1990) 11--21.

\bibitem{cdvIntroTranslated} 
Y. Colin de Verdi\`{e}re, On a new graph invariant and a criterion for planarity, in \textit{Graph Structure
Theory} (N. Robertson and P. Seymour, eds.), Contemp. Math., Amer. Math. Soc., Providence RI, (1993) 137--147.

\bibitem{chordalChar}
M. Fallat and L. Mitchell, Colin de Verdi\`{e}re parameters of chordal graphs, \textit{Electron. J. Linear Algebra} \textbf{26} (2013) 49--62. 

\bibitem{Nordhaus-Gaddum}
L. Hogben, Nordhaus-Gaddum problems for Colin de Verdi\`{e}re type parameters, variants of tree-width, and related parameters, in \textit{Recent Trends in Combinatorics} (A. Beveridge, J.R. Griggs, L. Hogben, G. Musiker, and P. Tetali, eds.), IMA Vol. Math. Appl., \textbf{159}, Springer, (2016) 275--294.

\bibitem{survey}
H. van der Holst, L. Lov\'{a}sz, and A. Schrijver, The Colin de Verdi\`{e}re graph parameter, in \textit{Graph Theory and Combinatorial Biology (Balatonlelle, 1996)}, 29–85, 
Bolyai Soc. Math. Stud. \textbf{7}, J\'{a}nos Bolyai Math. Soc., Budapest, (1999) 29--85. 

\bibitem{JorgensenEdgeBound}
L. J{\o}rgensen, Contractions to $K_8$, \textit{J. Graph Theory} \textbf{18} (1994) 431--448.

\bibitem{avgDegHadwiger}
A. Kostochka, Lower bound of the Hadwiger number of graphs by their average degree, \textit{Combinatorica} \textbf{4}(4) (1984) 307--316.

\bibitem{sphereRep}
A. Kotlov, L. Lov\'{a}sz, and S. Vempala, The Colin de Verdi\`{e}re number and sphere representations
of a graph, \textit{Combinatorica} \textbf{17}(4) (1997) 483--521.

\bibitem{BorsukThm}
L. Lov\'{a}sz and A. Schrijver, A Borsuk theorem for antipodal links and a spectral characterization
of linklessly embeddable graphs, \textit{Proc. Amer. Math. Society} \textbf{126} (1998) 1275--1285.

\bibitem{Mader} 
W. Mader, Homomorphieeigenschaften und mittlere kantendichte von graphen, \textit{Math. Ann.} \textbf{174} (1967) 265--268.

\bibitem{MaderEdgeBound}
W. Mader, Homomorphies\"{a}tze f\"{u}r graphen, \textit{Math. Ann.} \textbf{178} (1968) 154--168.
 
\bibitem{chordalGraphs} 
L. Mitchell and L. Yengulalp, Sphere representations, stacked polytopes, and the Colin de Verdi\`{e}re number of a graph, \textit{Electron. J. Combin.} \textbf{23}(1) (2016) P1.9.

\bibitem{Nevo} E. Nevo, Embeddability and stresses of graphs, \textit{Combinatorica} \textbf{27}(4) (2007) 465--472.

\bibitem{Pendavingh} R. Pendavingh, On the relation between two minor--monotone graph parameters, \textit{Combinatorica} \textbf{18}(2) (1998), 281--292.

\bibitem{RobertsonColour}
N. Robertson, D. Sanders, P.D. Seymour, and R. Thomas, The four-colour theorem, \textit{J. Combin. Theory Ser. B} \textbf{70} (1997) 2--44.

\bibitem{linklessSurvey}
N. Robertson, P.D. Seymour, and R. Thomas, A survey of linkless embeddings, in \textit{Graph Structure
Theory} (N. Robertson and P. Seymour, eds.), \textit{Contemp. Math.} \textbf{147} Amer. Math.
Soc., Providence, RI, (1993) 125--136.

\bibitem{linklessMinors}
N. Robertson, P. Seymour, and R. Thomas, Sachs' linkless embedding conjecture, \textit{J. Combin. Theory Ser. B} \textbf{64} (1995) 185--227.

\bibitem{hadwigerSurvey}
P. Seymour, Hadwiger's conjecture, in  \textit{Open Problems in Mathematics} (J. Nash Jr. and M. Rassias, eds.), Springer, (2016) 417--437.

\bibitem{SongThomas}
Z. Song and R. Thomas, The extremal function for $K_9$ minors, \textit{J. Combin. Theory Ser. B} \textbf{96} (2006) 240--252.

\bibitem{Tait}
M. Tait, The Colin de Verdi\`{e}re parameter, excluded minors, and the spectral radius, arxiv: 1703.09732.

\bibitem{Thomason} A. Thomason, An extremal function for contractions of graphs, \textit{Math. Proc. Cambridge Philos. Soc.}, \textbf{95}(2) (1984) 261--265.


\end{thebibliography}
\end{document}